\theoremstyle{plain}
\newtheorem{Tw}{Theorem}[section]
\newtheorem{Wn}[Tw]{Corollary}
\newtheorem{Uw}[Tw]{Remark}
\newtheorem{lem}[Tw]{Lemma}
\theoremstyle{definition}
\newtheorem{obs}[Tw]{Observation}
\newtheorem{ex}[Tw]{Example}
\newcommand{\bR}{\mathbb{R}}
\newcommand{\bC}{\mathbb{C}}
\newcommand{\bN}{\mathbb{Z}_{\geq0}}
\newcommand{\bZ}{\mathbb{Z}}
\newcommand{\bK}{\mathbb{K}}
\newcommand{\scalar}[2]{\langle #1 , #2 \rangle}
\newcommand{\ini}{\operatorname{in}}
\newcommand{\fm}{\mathfrak m}
\newcommand{\gr}{\operatorname{gr}}
\newcommand{\cP}{\mathcal P}
\title{Loose edges and factorization theorems}
\author{Janusz Gwo\'zdziewicz}
\address{Janusz Gwo\'zdziewicz\\
	Institute of Mathematics\\
	Pedagogical University of Cracow\\
	Podchor\c a{\accent95 z}ych 2\\
	PL-30-084 Cracow, Poland}
\email{e-mail: gwozdziewicz@up.krakow.pl}
\author{Beata Hejmej}
\address{Beata Hejmej\\\\
	Institute of Mathematics\\
	Pedagogical University of Cracow\\
	Podchor\c a{\accent95 z}ych 2\\
	PL-30-084 Cracow, Poland}
\email{bhejmej1f@gmail.com}
\author{Bernd Schober}
\address{Bernd Schober\\
	Institut f\"ur Algebraische Geometrie\\
	Leibniz Universit\"at Hannover\\
	Welfengarten 1\\
	30167 Hannover\\
	Germany}
\email{schober@math.uni-hannover.de}
\subjclass[2010]{13F25, 12E05, 14B05}
\keywords{irreducibility, formal power series, Newton polyhedron}
\thanks{The third author is supported by the DFG-project ``Order zeta functions and resolutions of singularities" (DFG project number: 373111162)}
\begin{document}
	\maketitle
	\begin{abstract}
		Let $ R $ be a regular local ring with maximal ideal $ \fm $.
		We consider 
		elements $ f \in R $
		such that their Newton polyhedron has a loose edge. We show that if the symbolic restriction of 
		$f$ to such an edge is a product of two coprime polynomials, then $f$ 
		factorizes in the 
		$ \fm $-adic completion.
	\end{abstract}
	
	\section{Introduction}
	
	\textbf{Notation.}
	We denote by $\bR_{\geq0}$ (respectively $\bR_{>0}$) 
	the set of nonnegative (respectively positive) real numbers. 
	The symbol $\scalar{\cdot}{\cdot}$ denotes the standard scalar product.
	We use a multi-index notation 
	$\underline x^{\alpha}:=x_1^{\alpha_1}\cdots x_n^{\alpha_n}$ 
	for $\alpha=(\alpha_1,\dots,\alpha_n)$. 
	The ring of formal power series of variables $x_1$,\dots, $x_n$ 
	with coefficients in a field $\bK$ will be denoted by $\bK[\![x_1,\dots,x_n]\!]$. By $\bK[\underline{x}^{\pm}]$ we mean the ring $\bK[x_1,x_1^{-1},\dots,x_n,x_n^{-1}]$.

	\medskip
	Let $ R $ be a regular local ring with maximal ideal $ \fm $ and residue field $ \bK = R/\fm $.
	Let $ \widehat R $ be its $ \fm $-adic completion.
	(Recall that $ \widehat R = R $ if $ R $ is complete).
	For example, this is the case  $ R = \bK[\![x_1,\dots,x_n]\!] $.
	The main results of the article are factorization theorems for elements in $ R $,
	resp.~for polynomials with coefficients in $ R $, using convex geometry.
	
	Let $ ( \underline{x}) = (x_1, \ldots, x_n) $ be a regular system of parameters for $ R $.
	As shown in \cite[Proposition 2.1]{CPmixed} (applied for $ J = \{1, \ldots, n \} $), 
	since $ R $ is Noetherian and the map $ R \subset \widehat R $ is faithfully flat,
	every element $ f \in R $ can be expanded as a {\em finite} sum
	\[
	f = \sum_{ \alpha  \in \bN^n} a_\alpha \underline{x}^\alpha, 
	\]
	for $ a_\alpha \in R^\times \cup \{ 0 \} $. 
	Here, the symbol $ R^\times $ denotes the set of units in $ R $.

	Let $f=\sum_{\alpha\in \bN^n} a_{\alpha}\underline x^{\alpha} \in R $ be a nonzero element in $ R $.
	We define the {\em Newton polyhedron} $\Delta(f)$ of $f$ as the convex hull of the set 
	$\{ \alpha: a_{\alpha}\neq 0\} + \bR_{\geq 0}^n$. A set $\Delta\subset \bR_{\geq 0}^n$ is called a {\em Newton polyhedron} if $\Delta=\Delta(f)$ for some nonzero $f\in~R $.

	Given a Newton polyhedron $\Delta \subset \bR^n_{\geq 0} $, for any $\xi\in\bR_{\geq 0}^n$ we call 
	the set $$\Delta^{\xi} := \{\, a\in \Delta: \scalar{\xi}{a}=\min_{b\in \Delta} \scalar{\xi}{b}\,\}$$ 
	a {\em face} of $\Delta$. 
	A Newton polyhedron has a finite number of faces 
	since it is defined by a finite set of exponents.
	A face $\Delta^{\xi}$ is compact if and only if $\xi\in \bR_{>0}^n$. 
	A face of dimension 0  (respectively 1) is called  a {\em vertex} (respectively an {\em edge}).
	Following \cite[p.~123]{lipkovski1988newton}, we call a  compact edge of a Newton polyhedron a {\it loose edge} 
	if it is not contained in any compact face of dimension~$\geq 2$.

	Several Newton polyhedra are drawn in the pictures that follow. The segments marked in blue are loose edges.
	
	\begin{center}
		\begin{tikzpicture}[scale=1.7]
		\draw [->](0,0,0) -- (1.2,0,0); \draw[->](0,0,0) -- (0,1.1,0) ; 
		\draw[->](0,0,0) -- (0,0,1.5);
		\draw[very thick] (0,0.7,0) -- (0,0,0.7);
		\draw[very thick] (0.7,0,0) -- (0,0,0.7);
		\draw[very thick] (0,0.7,0) -- (0.7,0,0);
		\node[draw,circle,inner sep=1pt,fill=black] at (0,0.7,0) {};
		\node[draw,circle,inner sep=1pt,fill=black] at (0.7,0,0) {};
		\node[draw,circle,inner sep=1pt,fill=black] at (0,0,0.7) {};
		\node [below=1.2cm, align=flush center,text width=2cm] at (0.4,0,0)    {Fig.\ 1};
		\end{tikzpicture}
		\quad
		\begin{tikzpicture}[scale=1.7]
		\draw [->](0,0,0) -- (1.5,0,0); \draw[->](0,0,0) -- (0,1.2,0) ; 
		\draw[->](0,0,0) -- (0,0,2.2);
		\draw[thick, dashed] (0,0.7,0) -- (0,0.7,2.2);
		\draw[thick, dashed] (0,0.7,0) -- (1.5,0.7,0);
		\draw[very thick, color=blue] (0,0.7,0) -- (0.175,0.3,0.25);
		\draw[very thick] (0.175,0.3,0.25) -- (0.7,0,1.6);
		\draw[very thick] (0.175,0.3,0.25) -- (1,0,0.48);
		\draw[very thick] (0.7,0,1.6) -- (1,0,0.48);
		\draw[thick, dashed ] (0.7,0,1.6) -- (0.7,0,2.2);
		\draw[thick, dashed] (1,0,0.48) -- (1.7,0,0.48);
		\draw[thick, dashed] (0.175,0.3,0.25) -- (1.6,0.3,0.25);
		\draw[thick, dashed] (0.175,0.3,0.25) -- (0.175,0.3,2.2);
		\node[draw,circle,inner sep=1pt,fill=black] at (0,0.7,0) {};
		\node[draw,circle,inner sep=1pt,fill=black] at (0.175,0.3,0.25) {};
		\node[draw,circle,inner sep=1pt,fill=black] at (0.7,0,1.6) {};
		\node[draw,circle,inner sep=1pt,fill=black] at (1,0,0.48) {};
		\node [below=1.2cm, align=flush center,text width=2cm] at (0.5,0,0)    {Fig.\ 2};
		\end{tikzpicture} 
		\quad
		\begin{tikzpicture}[scale=1.7]
		\draw [->](0,0,0) -- (1.2,0,0); \draw[->](0,0,0) -- (0,1.1,0) ; 
		\draw[->](0,0,0) -- (0,0,1.5);
		\draw[very thick, color=blue] (0.8,0.8,0) -- (0.4,0.4,0.4);
		\draw[very thick, color=blue] (0.8,0,0.8) -- (0.4,0.4,0.4);
		\draw[very thick, color=blue] (0,0.8,0.8) -- (0.4,0.4,0.4);
		\draw[thick, dashed] (0.4,0.4,0.4) -- (0.4,1.1,0.4);
		\draw[thick, dashed] (0.4,0.4,0.4) -- (0.4,0.4,2.3);
		\draw[thick, dashed] (0.4,0.4,0.4) -- (1.35,0.4,0.4);
		\draw[thick, dashed] (0.8,0.8,0) -- (1.2,0.8,0);
		\draw[thick, dashed] (0.8,0.8,0) -- (0.8,1.1,0);
		\draw[thick, dashed] (0,0.8,0.8) -- (0,0.8,1.7);
		\draw[thick, dashed] (0,0.8,0.8) -- (0,1.2,0.8);
		\draw[thick, dashed] (0.8,0,0.8) -- (0.8,0,1.7);
		\draw[thick, dashed] (0.8,0,0.8) -- (1.5,0,0.8);
		\node[draw,circle,inner sep=1pt,fill=black] at (0.4,0.4,0.4) {};
		\node[draw,circle,inner sep=1pt,fill=black] at (0.8,0.8,0) {};
		\node[draw,circle,inner sep=1pt,fill=black] at (0.8,0,0.8) {};
		\node[draw,circle,inner sep=1pt,fill=black] at (0,0.8,0.8) {};
		\node [below=1.2cm, align=flush center,text width=2cm] at (0.5,0,0)    {Fig.\ 3};
		\end{tikzpicture}
	\end{center}
	
	The Newton polyhedron in Figure 1 does not have any loose edge. 
	This is the typical situation. 
	The Newton polyhedron in Figure 2 has a loose edge with the end point 
	at $(0,0,d)$. Weierstrass polynomials  $f\in \bK[\![x_1,\dots,x_n]\!][z]$ 
	such that $\Delta(f)$ is of this type were studied in \cite{artal2011},
	\cite{parusinski2012abhyankar}, 
	and  \cite{rond2017irreducibility}.

	In Figure 3 all compact edges are loose. A Newton polyhedron with this property 
	is called in \cite{perez2000singularites} a {\em polygonal Newton polyhedron}. 
	Notice that the term polygonal Newton polyhedron can be 
	a bit misleading since the union of compact edges in Figure 3 is not homeomomorphic 
	to any polygon. 
	
	Every compact edge of a plane Newton polyhedron is loose as illustrated in Figure 4. 
	
	\begin{center}
		\begin{tikzpicture}[
		scale = 0.6,
		foreground/.style = {  thick },
		background/.style = { dashed },
		line join=round, line cap=round
		]
		\draw[fill=black, opacity=0.1] (4,0)--(4.9,0)--(4.9,4.9)--(0,4.9)--(0,4)--(1,2)--(2,1)--cycle;
		\draw[foreground,->] (0,0)--+(5,0);
		\draw[foreground,->] (0,0)--+(0,5);
		\draw[very thick, color=blue] (1,2)--(2,1);
		\draw[very thick, color=blue] (0,4)--(1,2);
		\draw[very thick, color=blue] (2,1)--(4,0);
		\draw (1,-0.15) node[below] {$ $};
		\draw (-0.15,1) node[left] {$ $};
		\foreach \x in{} {
			\foreach \y in{}{
				\draw[fill, opacity=0.9]  (\x,\y) circle (0.5pt);
			}
		}; 
		\node[draw,circle,inner sep=1pt,fill=black] at (0,4) {};
		\node[draw,circle,inner sep=1pt,fill=black] at (1,2) {};
		\node[draw,circle,inner sep=1pt,fill=black] at (2,1) {};
		\node[draw,circle,inner sep=1pt,fill=black] at (4,0) {};  
		\node [below=0.4cm, align=flush center,text width=4cm] at (2.2,0)    {Fig.\ 4};
		\end{tikzpicture}
	\end{center}
	
	Consider $f=\sum_{\alpha\in \bN^n} a_{\alpha}\underline x^{\alpha} \in R $.
	The \textit{symbolic restriction} of $f$ to a compact face 
	$A := \Delta^\xi \subset \Delta(f)$,
	for some $ \xi \in \bR^n_{> 0} $, 
	is defined as the polynomial
	$$  f|_A=\sum_{\alpha\in A} \overline a_{\alpha} \underline X^{\alpha} \in \bK [X_1,\ldots, X_n ] , $$
	where $ \overline a_\alpha \in \bK $ 
	is the image of $ a_\alpha $ in the residue field of $ R $.
	The element $ f|_A $ is a polynomial since $ A $ being a compact face implies that $ A \cap \bN^n $ is finite.
	In section \ref{sec:general}, we provide a connection to a certain graded ring which explains the use of capital letters $ (X_1, \ldots, X_n) $.
	
	Below are the main results of the paper. 
	\begin{Tw} \label{t1}
		Let $ R $ be a regular local ring and let $ f \in R $ be a nonzero element in $ R $.	
		Assume that the Newton polyhedron $\Delta(f)$ has a  loose edge $E$.
		If~$f|_E$ is a product of two relatively prime polynomials $G$ and $H$, 
		where $G$ is not divided by any variable, then there exist 
		$ g$, $ h \in \widehat R $ in the completion of $ R $ such that
		$f= g  h $ in $ \widehat R $ and  $ g|_{E_1}=G$, $ h|_{E_2}=H$ for some $E_1$, $E_2$ such that 
		$ E $ is the Minkowski sum of $ E_1 $ and $ E_2 $,
		$E=E_1+E_2$. 
	\end{Tw}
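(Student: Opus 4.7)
The plan is to prove Theorem \ref{t1} by a weighted Hensel-lifting argument driven by the loose edge $E$. First I would fix a weight $\xi \in \bR^n_{>0}$ with $\Delta(f)^\xi = E$; such $\xi$ exists since $E$ is a compact face of $\Delta(f)$, and the looseness of $E$ means $E$ is not contained in any compact face of higher dimension, so for $\xi'$ in the interior of the appropriate cone one has $\Delta(f)^{\xi'} = E$, while on its boundary $\Delta(f)^{\xi'}$ is just a vertex of $E$. The $\xi$-weighted valuation $v_\xi(\underline{x}^\alpha) := \scalar{\xi}{\alpha}$ induces a filtration on $R$ whose associated graded (as will be described in Section~\ref{sec:general}) is identified with the polynomial ring $\bK[X_1, \dots, X_n]$. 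Under this identification the $\xi$-initial form of $f$ is precisely $f|_E = G\cdot H$.

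I would then build $g,h$ iteratively. Take naive lifts $g_0,h_0 \in R$ of $G,H$ via $X_i \mapsto x_i$, so that $\ini_\xi(g_0) = G$, $\ini_\xi(h_0) = H$ and $v_\xi(f - g_0 h_0) > v_\xi(f)$. Define $g_{k+1} := g_k + u_k$, $h_{k+1} := h_k + v_k$, where $u_k, v_k$ solve the linearization
\[
h_k u_k + g_k v_k \equiv f - g_k h_k \pmod{v_\xi > v_\xi(f - g_k h_k)}.
\]
Reducing modulo higher $\xi$-order, this takes the graded form $H\cdot\bar u_k + G\cdot\bar v_k = \overline{f - g_k h_k}$, and it is precisely here that the hypotheses $\gcd(G,H) = 1$ and $G$ not divisible by any variable enter: together they guarantee solvability of this Bezout-type equation for the specific right-hand sides that arise. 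Since $\xi \in \bR^n_{>0}$, the $\xi$-adic and $\fm$-adic topologies on $R$ agree, so the sequences $g_k, h_k$ converge in $\widehat R$ to elements $g, h$ with $f = gh$.

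Finally, with $E_1 := \Delta(g)^\xi$ and $E_2 := \Delta(h)^\xi$, the identity $\ini_\xi(f) = \ini_\xi(g)\cdot\ini_\xi(h)$ gives $f|_E = g|_{E_1}\cdot h|_{E_2}$, and since the Newton polytope of a product is the Minkowski sum of the factors, $E = E_1 + E_2$; by construction $g|_{E_1} = G$ and $h|_{E_2} = H$. I expect the main obstacle to be the Bezout step. Coprimality of $G,H$ in $\bK[X_1,\dots,X_n]$ does \emph{not} imply $(G,H) = (1)$, so solving $HU + GV = D$ for arbitrary $D$ is not possible. This is where the looseness of $E$ really pays off: after passing to the Laurent polynomial ring $\bK[\underline X^{\pm}]$ and writing $f|_E = \underline X^{v_1} P(\underline X^e)$ for a univariate $P$ and the primitive direction vector $e$ of $E$, the coprime factorization $f|_E = G\cdot H$ corresponds to a coprime factorization of $P$ in the one-variable ring $\bK[T]$, where Bezout holds; the assumption that $G$ is divisible by no variable then pins down the factorization uniquely by forcing every monomial factor of $f|_E$ into $H$.
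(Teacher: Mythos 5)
Your overall strategy---Hensel lifting along a weighting adapted to $E$---is the same as the paper's, but the technical core is missing, and the specific set-up you choose would not support it. Grading $R$ by a \emph{single} weight $\xi\in\bR^n_{>0}$ with $\Delta(f)^\xi=E$ makes the homogeneous pieces far too large: the piece of weight $d$ is spanned by \emph{all} lattice points of the hyperplane $\scalar{\xi}{\cdot}=d$ inside the orthant, so the right-hand sides $D=\overline{f-g_kh_k}$ of your Bezout equations are in general not of the form $\underline X^{v}P(\underline X^{e})$ for a univariate $P$ and a direction $e$ of $E$. Your proposed fix via Bezout in $\bK[T]$ therefore applies only to $f|_E$ itself (which is supported on $E$), not to the correction terms that actually need to be split at each stage of the iteration. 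The paper avoids this by grading with $n-1$ integer weights $\xi_1,\dots,\xi_{n-1}\in\bN^n$ orthogonal to $E$ (Lemma \ref{Lem:base}), so that each graded piece $R_w$ is spanned by the lattice points of a single segment parallel to $E$ and is finite dimensional.

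Even after reducing to a single line, Bezout in $\bK[T]$ (equivalently, in the Laurent ring) only produces solutions with possibly negative exponents; the essential difficulty is to show that $G R_{z+i}+H R_{w+i}=R_{w+z+i}$ with genuine polynomial solutions. This is Lemma \ref{l3}, proved via an exact sequence together with the dimension identity $\dim R_{w+z}=\dim R_w+\dim R_z-1$ of Lemma \ref{Lem:sum}, whose proof is a delicate lattice-point count; it is exactly there that the hypothesis that $G$ is not divisible by any variable enters (it forces $R_w$ to contain two coprime monomials, i.e.\ the segment $l_w$ to touch two coordinate hyperplanes). Your sketch does not address this positivity issue at all. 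You also mislocate the role of looseness: the existence of $\xi$ with $\Delta(f)^\xi=E$ holds for \emph{any} compact edge, loose or not. Looseness is what makes Lemma \ref{Lem:edge} true, namely that $\scalar{\xi}{\alpha}\ge\scalar{\xi}{\alpha_0}$ for every exponent $\alpha$ of $f$ and every $\xi\in\bR^n_{\ge0}$ orthogonal to $E$; this is what puts all weights of $f$ into $w+z+M$ and makes the recursion over $M$ (in degree-lexicographic order) well-founded and exhaustive. As written, your argument never uses looseness in an essential way, which is a sign that a key ingredient is absent.
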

	
	In the above theorem $E_1$ is a loose edge of $\Delta(g)$ parallel to $E$ and $E_2$ is a compact face of $\Delta(h)$ 
	which is a loose edge parallel to $E$ or a vertex 
	if $H$ is a monomial.

	\begin{Uw}
		The assumption of Theorem \ref{t1} that  $G$ is not divisible by any variable  cannot be omitted. 
		Consider the power series $f=(x_3^2+x_1x_2)(x_3+x_1 x_2)$. 
		Its Newton polyhedron has a loose edge $E$ with endpoints $(1,1,1)$, $(2,2,0)$ and 
		$f|_E=x_1x_2(x_3+x_1x_2)$. The irreducible factors of $f$ are $f_1=x_3^2+x_1x_2$ and 
		$f_2=x_3+x_1 x_2$. Hence $f$ cannot be a product of power series $g$, $h$ such that 
		$g|_{E_1}=x_2(x_3+x_1x_2)$ and $h|_{E_2}=x_1$.
	\end{Uw}
	
	\begin{Wn}\label{w1}
		Assume that the Newton polyhedron of $f\in R $ 
		has a loose edge and at least three vertices. 
		Then $f$ is not irreducible in the completion $ \widehat R $. 
	\end{Wn}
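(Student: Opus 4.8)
The plan is to deduce the corollary from Theorem \ref{t1} by a case analysis on the symbolic restriction $f|_E$ to the loose edge $E$. First I would set up the combinatorial picture: the edge $E$ is a compact segment in $\bR^n_{\geq 0}$, so after choosing $\xi \in \bR^n_{>0}$ with $E = \Delta(f)^\xi$, the polynomial $f|_E$ is $\xi$-quasihomogeneous with respect to the two endpoints of $E$. Writing $E$ with primitive direction vector $v$, the monomials occurring in $f|_E$ lie on an arithmetic progression $\alpha_0, \alpha_0 + v, \ldots, \alpha_0 + m v$ with $m \geq 1$ (since $E$ is an edge, hence one-dimensional, $m\geq 1$). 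After dividing out the largest common monomial factor $\underline X^{\beta}$ we may write $f|_E = \underline X^{\beta}\, P(\underline X^{v})$ where $P$ is a one-variable polynomial of degree $m$, not divisible by the variable. The key point: the hypothesis that $\Delta(f)$ has at least three vertices forces, via the loose-edge condition, that either $m\geq 2$, or $\beta \neq 0$ while $m=1$ — in any case $f|_E$ admits a nontrivial factorization into coprime pieces, one of which is not divisible by any variable.

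Next I would make the factorization explicit in each case. If $m \geq 2$: since $\bK$ need not be algebraically closed I cannot assume $P$ splits, but $P$ has degree $\geq 2$, and I can write $P = P_1 P_2$ with $\deg P_1 \geq 1$, $\deg P_2 \geq 1$ only if $P$ is reducible over $\bK$. To avoid the irreducibility obstruction, I instead use the monomial factor: set $G = \underline X^{\beta} P(\underline X^v)$ if $\beta=0$ is impossible, otherwise split off one variable. The cleanest route: if $\beta\neq 0$, take $H$ to be the monomial $\underline X^{\beta}$ and $G = P(\underline X^v)$; these are coprime (as $G$ is not divisible by any variable and $H$ is a monomial), $G$ is not divisible by any variable, and both are nonconstant — $G$ because $m\geq 1$, $H$ because $\beta \neq 0$. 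If instead $\beta = 0$, then $f|_E = P(\underline X^v)$ with $m \geq 2$; here I must argue $m \geq 2$ follows from "three vertices", and then factor $P = (\text{unit})\cdot\prod(\text{monic irreducible factors})$ over $\bK$, grouping them as $P = Q_1 Q_2$ with $Q_1$ coprime to $Q_2$ — this is possible unless $P$ is a prime power $Q^e$, which is the genuinely delicate subcase.

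The main obstacle is exactly this subcase where $\beta = 0$ and $f|_E = c\, Q(\underline X^v)^e$ is a power of a single irreducible (over $\bK$) polynomial. Here Theorem \ref{t1} does not directly apply since there is no coprime splitting of $f|_E$. I would handle it by a dimension/vertex count argument: I claim that if $\Delta(f)$ has a loose edge $E$ with $\beta=0$ and $f|_E$ a prime power, then $\Delta(f)$ cannot have three or more vertices — more precisely, a loose edge emanating with $\beta=0$ means $E$ meets a coordinate hyperplane, and a careful analysis of which other vertices $\Delta(f)$ can carry (using that $E$ is not contained in any $2$-dimensional compact face) bounds the vertex count by $2$. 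Thus this subcase is vacuous under the hypothesis, and the remaining cases all yield a coprime factorization of $f|_E$ with the non-divisibility property, so Theorem \ref{t1} produces $f = gh$ in $\widehat R$ with $g, h$ nonunits (nonunits because $g|_{E_1}$ and $h|_{E_2}$ are nonconstant, forcing $g, h \in \widehat{\fm}$... — actually $g$ a unit would force $g|_{E_1}$ constant, contradiction, and similarly for $h$). Hence $f$ is reducible in $\widehat R$, completing the proof. I would double-check the edge cases where one of $G, H$ could be forced constant, as that is where the "at least three vertices" hypothesis is really consumed.
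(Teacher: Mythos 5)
Your overall strategy is the paper's: split $f|_E$ into its monomial gcd and the monomial-free remaining factor, check they are coprime with the latter not divisible by any variable, and feed them into Theorem~\ref{t1}. However, the entire content of the corollary beyond Theorem~\ref{t1} sits in the claim you leave unproved: that a loose edge whose two endpoint monomials have trivial gcd (your case $\beta=0$, i.e.\ $\min(a_i,b_i)=0$ for all $i$) forces $\Delta(f)$ to have only the two vertices $a$ and $b$. This is precisely Lemma~\ref{Lem:only_vertices}, and its proof is not a routine ``careful analysis'': it rests on Lemma~\ref{Lem:edge} (a genuine consequence of looseness, proved by an intermediate-value argument on the vertex function $\psi$) applied to the test vectors with entries $1/a_i$ and $1/b_j$, which yields the convex decomposition $c=\lambda a+\mu b+z$. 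As written, you assert the vertex bound only in the prime-power subcase and only as a one-line gesture, yet this is exactly where the hypothesis ``at least three vertices'' is consumed; until that claim is supplied the argument is incomplete.

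Once Lemma~\ref{Lem:only_vertices} is in hand, your case analysis collapses to the paper's two-line proof: three vertices force $\beta=(\min(a_1,b_1),\dots,\min(a_n,b_n))\neq 0$ outright, so $G:=\underline X^{-\beta}\cdot f|_E$ (a polynomial, not divisible by any variable, and nonconstant since it contains the distinct coprime monomials $\underline X^{a-\beta}$ and $\underline X^{b-\beta}$) together with $H:=\underline X^{\beta}$ works in every case. In particular the dichotomy ``$m\geq 2$, or $\beta\neq 0$ while $m=1$'' is vacuous ($m\geq 1$ always holds for an edge), and the discussion of whether $P$ factors or is a prime power over $\bK$ is never needed. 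Two smaller points: the normalization $f|_E=\underline X^{\beta}P(\underline X^{v})$ with $P$ an ordinary polynomial is not literally correct when $v$ has negative entries (e.g.\ $c_0X_2^2+c_1X_1X_2+c_2X_1^2$ has $\beta=0$ but equals $X_2^2\,P(X_1/X_2)$, not $P(X_1/X_2)$), so it is safer to work directly with $\underline X^{-\beta}f|_E$; and your argument that $g$ and $h$ are non-units because their symbolic restrictions are nonconstant is correct.
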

	
	\begin{Wn}\label{w2}
		Assume that the Newton polyhedron of $f\in R $ 
		has a loose edge $E$.
		If $f$ is irreducible in $ \widehat R $,
		then $E$ is the only compact edge of $\Delta(f)$ and $f|_E=cF^k$,
		where 
		$ c\in \bK^\times $ and
		$F \in \bK[X_1,\ldots, X_n]$ is an irreducible polynomial (with
		notations as
		before). 
		Moreover, if the residue field $\bK$ of $ R $ is algebraically closed, then there exist $a,b\in \bK, k\in \bN, \alpha,\beta\in\bN^n$  such that  $f|_E=(a \underline X^{\alpha}+b \underline X^{\beta})^k$ 
		with a primitive lattice vector $\alpha-\beta$.
	\end{Wn}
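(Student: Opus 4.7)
The plan is to combine Corollary \ref{w1} with repeated applications of Theorem \ref{t1} in contrapositive form. First, by Corollary \ref{w1} the existence of a loose edge together with at least three vertices forces reducibility in $\widehat R$, so $\Delta(f)$ has exactly two vertices (the endpoints of $E$) and $E$ is the unique compact edge. Next, I would parametrise $f|_E$: letting $\alpha_0, \alpha_1$ be the endpoints of $E$, write $\alpha_1 - \alpha_0 = k v$ with $v \in \bZ^n$ primitive, decompose $v = \alpha - \beta$ with $\alpha, \beta \in \bN^n$ of disjoint supports, and set $m := \alpha_0 - k\beta \in \bN^n$. Then
\[
    f|_E \;=\; \underline{X}^m\,P(\underline{X}^\alpha,\underline{X}^\beta), \qquad P(U,V) \;=\; \sum_{i=0}^k c_i U^i V^{k-i},
\]
with $c_0, c_k \neq 0$, and the disjointness of supports forces $W := P(\underline{X}^\alpha, \underline{X}^\beta)$ to be divisible by no variable.

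The core step is a contrapositive application of Theorem \ref{t1} showing both that $W$ is a prime power and that $m = 0$. Suppose $W = A \cdot B$ with $A, B$ coprime and nonconstant in $\bK[X_1, \ldots, X_n]$. Since $W$ is divisible by no variable, neither is $A$ nor $B$, and being nonconstant neither is a monomial. Applying Theorem \ref{t1} to the decomposition $f|_E = G \cdot H$ with $G := A$ and $H := c_k \underline{X}^m B$ produces $f = gh$ in $\widehat R$ in which, by the remark after Theorem \ref{t1}, both $E_1$ and $E_2$ are loose edges. The elementary observation that any Newton polyhedron containing $0$ equals $\bR_{\geq 0}^n$ and thus admits no compact face of positive dimension then forces $g, h \in \fm$, contradicting irreducibility. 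Hence $W = c F^k$ for a single irreducible $F$. A second invocation of the same device with $G := F^k$ and $H := c_k \underline{X}^m$ rules out $m \neq 0$: otherwise $E_2 = \{m\}$ would be a nonzero vertex of $\Delta(h)$, whence again $h \in \fm$. Therefore $m = 0$ and $f|_E = c F^k$ with $F$ irreducible.

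For the algebraically closed case, I would further analyse $F$. Since $F$ divides $W$, its Newton polytope is a segment parallel to $v$, and one deduces $F = P'(\underline{X}^\alpha, \underline{X}^\beta)$ for a homogeneous polynomial $P'(U, V) \in \bK[U, V]$ (any monomial prefactor is absorbed by the irreducibility of $F$). Over an algebraically closed $\bK$ the polynomial $P'$ splits into linear factors $U - \mu V$, and irreducibility of $F$ forces $\deg P' = 1$. Thus $F = a' \underline{X}^\alpha + b' \underline{X}^\beta$ for some $a', b' \in \bK^\times$; absorbing the remaining constant via a $k$-th root in $\bK$ yields $f|_E = (a \underline{X}^\alpha + b \underline{X}^\beta)^k$ with $\alpha - \beta$ primitive by construction.

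The main obstacle I expect is the bookkeeping needed to guarantee that each application of Theorem \ref{t1} delivers a genuinely nontrivial factorisation, i.e., that neither $g$ nor $h$ is a unit in $\widehat R$. This ultimately rests on the elementary observation that the loose edges and nonzero vertices furnished by the remark following Theorem \ref{t1} preclude $0 \in \Delta(g)$ and $0 \in \Delta(h)$, and this must be verified at both contrapositive invocations.
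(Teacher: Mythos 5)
Your proof is correct, and since the paper does not actually print a proof of Corollary \ref{w2}, there is nothing to contrast it with: your argument is precisely the intended one, namely the proof of Corollary \ref{w1} run in reverse to pin down the vertex set, followed by contrapositive applications of Theorem \ref{t1} exactly in the spirit of the printed proof of Corollary \ref{w1}. Two small points deserve to be made explicit rather than implicit. First, the primitive direction $v$ of a compact edge necessarily has both a positive and a negative coordinate (otherwise one endpoint of $E$ would lie in the translated orthant of the other and could not be a vertex); this is what guarantees $\alpha,\beta\neq 0$, hence that $\underline{X}^{k\alpha}$ and $\underline{X}^{k\beta}$ are coprime nonconstant monomials of $W$ and that $W$ and its nonconstant divisors have no constant term, which is exactly what your non-unit bookkeeping at the end relies on. Second, in the algebraically closed case, the assertion that the Newton polytope of $F$ is a segment parallel to $v$ should be justified by noting that $W$ is $\omega$-homogeneous and that divisors of homogeneous elements in the $\bN^{n-1}$-graded ring are again homogeneous. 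Finally, the factors $c_k$ appearing in your definitions of $H$ are stray constants that should simply be absorbed into one of the factors so that $GH$ really equals $f|_E$.
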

	
	We will say that a segment $E\subset\bR^{n+1}$ is \textit{descendant} if $E$ is parallel to some vector 
	$ c=(c_1,\dots,c_n, c_{n+1})$ such that $c_i\geq 0$ for $i=1,\dots,n$ and $c_{n+1}<0$.

	\begin{Tw} \label{t43}
		Let $ R $ be a regular local ring and let
		$f\in R[y] $ be a polynomial with coefficients in $ R $, for some independent variable $ y $. 
		Assume that the Newton polyhedron $\Delta(f) \subset \bR^{n+1}_{\geq 0} $ has a loose and  descendant edge $E$. 
		If $f|_E$ is a product of two relatively prime polynomials  $G, H \in \bK[X_1,\ldots, X_n][y] $, 
		where  $G$ is monic with respect to 
		$y$, then there exist
		$g,h\in \widehat R[y] $ 
		such that $f=gh$, 
		$g$ is a monic polynomial with respect to $ y $
		and  $g|_{E_1}=G$, $h|_{E_2}=H$ for some  $E_1$, $E_2$ such that $E=E_1+E_2$. 
		Moreover, if $ R = \bK[\![x_1,\dots,x_{n}]\!] $, then $ g $ and $ h $ are uniquely determined.
	\end{Tw}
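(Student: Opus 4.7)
The plan is to apply Theorem \ref{t1} in the regular local ring obtained by adjoining $y$ to $R$, and then use Weierstrass preparation in the $y$-variable to upgrade the resulting power-series factorization to a polynomial one. Concretely, view $f \in R[y]$ as an element of $S := R[y]_{(\fm, y)}$, a regular local ring with regular parameters $(x_1, \ldots, x_n, y)$ and completion $\widehat{S} = \widehat{R}[\![y]\!]$; its Newton polyhedron and symbolic restriction agree with those in the statement. Monicity of $G$ in $y$ of degree $d$ forces $G$ to contain the monomial $y^d$, so $G$ is not divisible by any $X_i$; provided also $y \nmid G$, Theorem \ref{t1} applies and yields $f = g_0 h_0$ in $\widehat{R}[\![y]\!]$ with $g_0|_{E_1} = G$, $h_0|_{E_2} = H$, and $E = E_1 + E_2$. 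The case $y \mid G$ requires an auxiliary reduction: write $G = y^k \widetilde{G}$ with $y \nmid \widetilde{G}$, apply Theorem \ref{t1} to the coprime pair $(\widetilde{G}, y^k H)$, and reassemble the factors.

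To upgrade $g_0$ to a monic polynomial in $y$, I use the descendant property: combined with monicity of $G$ of degree $d$, it forces $(0, \ldots, 0, d)$ to be the vertex of $E_1$ with maximal $y$-coordinate, and hence a vertex of $\Delta(g_0)$. Upward closure and convexity of Newton polyhedra preclude any lattice point $(0, \ldots, 0, b)$ with $b < d$ from lying in $\Delta(g_0)$, so $g_0(0, y) \in \bK[\![y]\!]$ has order exactly $d$; that is, $g_0$ is $y$-regular of order $d$. Weierstrass preparation then produces $g_0 = U \cdot g$ with $U \in \widehat{R}[\![y]\!]^{\times}$ and $g \in \widehat{R}[y]$ monic of degree $d$ in $y$. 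Setting $h := U h_0$ gives $f = g h$; performing polynomial long division of $f$ by the monic $g$ in $\widehat{R}[y]$ and invoking uniqueness of Weierstrass division forces $h \in \widehat{R}[y]$. For the face conditions, the Minkowski identity $\Delta(g_0) = \Delta(U) + \Delta(g)$ combined with $\Delta(U) = \bR^{n+1}_{\geq 0}$ gives $g_0|_{E_1} = \overline{U(0, 0)} \cdot g|_{E_1}$; comparing leading $y$-coefficients (both equal to $1$ by monicity) yields $\overline{U(0, 0)} = 1$, whence $g|_{E_1} = G$ and analogously $h|_{E_2} = H$.

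For the uniqueness statement when $R = \bK[\![x_1, \ldots, x_n]\!]$, suppose $f = g h = g' h'$ are two factorizations meeting the conclusion, both with $g, g'$ monic of degree $d$ in $y$. The identity $g'(h - h') = (g' - g) h$, together with $\deg_y(g' - g) < d$ and the coprimality $\gcd(G, H) = 1$ of the symbolic restrictions, yields $g = g'$ (and hence $h = h'$) by an induction on weighted order with respect to the functional defining $E$. The principal obstacle I anticipate is tracking the symbolic restriction through the Weierstrass preparation step so that $g|_{E_1} = G$ holds on the nose — this is the role of the leading-coefficient comparison above — together with a clean treatment of the case $y \mid G$, where reassembling the factorization for $(\widetilde{G}, y^k H)$ back to one witnessing $(G, H)$ requires an additional Weierstrass-type step on the second factor.
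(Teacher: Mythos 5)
Your main case and your handling of the Weierstrass step are sound and essentially follow the paper's route: the paper likewise runs the recursive construction of Theorem~\ref{t1} to get a factorization $f=\overline{g}\,\overline{h}$ in $\widehat R[\![y]\!]$ and then applies Weierstrass preparation to the factor with initial form $G$, using exactly your observation that the descendant edge together with monicity forces $(0,\dots,0,d)$ to be a vertex of the Newton polyhedron of that factor. Your explicit verification that the unit from Weierstrass preparation satisfies $\overline{U(0)}=1$, so that $g|_{E_1}=G$ survives the preparation, is a detail the paper passes over silently, and it is a welcome addition.

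The genuine gap is the case $y\mid G$, which you flag as the principal obstacle but do not resolve, and your proposed fix does not work. After applying Theorem~\ref{t1} to $(\widetilde{G},y^kH)$ you have $f=\widetilde{g}\,\widetilde{h}$ with $\widetilde{h}$ having symbolic restriction $y^kH$, and you must now split $\widetilde{h}$ into a factor with restriction $y^k$ and one with restriction $H$. No available tool does this: Theorem~\ref{t1} does not apply to the pair $(y^k,H)$ (the first factor is divisible by $y$), nor in general to $(H,y^k)$ (nothing prevents $H$ from being divisible by some $X_i$, or from being a monomial, in which case the relevant face of $\Delta(\widetilde{h})$ is a vertex and there is no edge at all); and a Weierstrass-type step on $\widetilde{h}$ would require a vertex of $\Delta(\widetilde{h})$ on the $y$-axis, whereas the top vertex of the relevant face is $(0,\dots,0,k)$ shifted by the exponent of a monomial of $H$. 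Concretely, for $n=1$ take $G=Y^2+X^2Y$ and $H=X$ (so $f|_E=XY^2+X^3Y$ on the descendant edge from $(1,2)$ to $(3,1)$): the pair $(\widetilde{G},yH)=(Y+X^2,XY)$ produces a $\widetilde{h}$ whose restriction is the single monomial $XY$, and there is no mechanism to split it; moreover, the desired factorization $f=gh$ need not refine $f=\widetilde{g}\,\widetilde{h}$ in the UFD $\widehat R[\![y]\!]$, so one cannot argue that the two are compatible. (In the extreme sub-case $G=Y^d$ your reduction collapses entirely, since $\widetilde{G}=1$.) The paper avoids this by proving Lemma~\ref{l5}, which establishes the key surjectivity $GR_{z+i}+HR_{w+i}=R_{w+z+i}$ directly for $G$ monic in $y$ --- via the special case $G=y$ together with Lemmas~\ref{l3} and~\ref{l4} --- and then reruns the recursion of Theorem~\ref{t1} with this lemma in place of Lemma~\ref{l3}; some statement of this kind is what you need to close the gap. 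Your uniqueness argument is only a sketch, but the paper offers no proof of that assertion either, so this is a minor point by comparison.
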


	\medskip
	
	\section{Proofs  if $ R = \bK[\![x_1,\dots,x_{n}]\!] $}
	At the beginning of this section we establish some results about loose edges.

	\begin{lem}\label{Lem:edge}
		Let $\Delta$ be a Newton polyhedron with a loose edge $E$ that has ends 
		$a$, $b\in\mathbb{R}_{\geq 0}^n$.
		Then for every $c\in \Delta$ and every $\xi\in\mathbb{R}_{\geq0}^n$ such that 
		$\scalar{\xi}{a} = \scalar{\xi}{b}$ one has $\scalar{\xi}{c} \geq \scalar{\xi}{a}$.
	\end{lem}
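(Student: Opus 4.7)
The plan is to argue by contradiction. Assume some $c\in\Delta$ has $\scalar{\xi}{c}<\scalar{\xi}{a}$; then, together with $\scalar{\xi}{a}=\scalar{\xi}{b}$ and convexity of $E$, this forces $E\cap\Delta^{\xi}=\emptyset$. The goal will be to extract from this a compact face of $\Delta$ of dimension at least $2$ containing $E$, which contradicts the looseness hypothesis.

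The central tool is a one-parameter deformation. I would pick $\eta\in\bR_{>0}^{n}$ with $E=\Delta^{\eta}$ (such $\eta$ exists because $E$ is a compact edge) and set $\xi_{t}:=\xi+t\eta$ for $t\geq 0$. For every $t>0$ one has $\xi_{t}\in\bR_{>0}^{n}$, so $\Delta^{\xi_{t}}$ is compact. A direct calculation yields $\scalar{\xi_{t}}{a}=\scalar{\xi_{t}}{b}$ for all $t$, which together with the convexity of $E$ gives the crucial dichotomy: for each $t$, either $E\subset\Delta^{\xi_{t}}$ or $E\cap\Delta^{\xi_{t}}=\emptyset$. The set $T:=\{\,t\geq 0:E\subset\Delta^{\xi_{t}}\,\}$ is convex (since $\{\xi':E\subset\Delta^{\xi'}\}$ is a convex cone, being cut out by linear inequalities) and closed (by the semicontinuity $\Delta^{\xi_{n}}\subset\Delta^{\xi}$ whenever $\xi_{n}\to\xi$). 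For $t$ sufficiently large, $\Delta^{\xi_{t}}=E$: the term $t\eta$ forces the minimum of $\scalar{\xi_{t}}{\cdot}$ onto $\Delta^{\eta}=E$, on which $\xi$ is constant. Hence $T=[t_{\ast},\infty)$ with $t_{\ast}>0$ (positive because $0\notin T$).

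The contradiction emerges from $H:=\Delta^{\xi_{t_{\ast}}}$, which is compact (since $\xi_{t_{\ast}}\in\bR_{>0}^{n}$) and contains $E$. If $H$ were equal to $E$, then for $t<t_{\ast}$ close to $t_{\ast}$ the semicontinuity would give $\Delta^{\xi_{t}}\subset E$, so $\Delta^{\xi_{t}}$ would be either $E$ itself or a single vertex of $E$; the dichotomy rules out the vertex case, leaving $\Delta^{\xi_{t}}=E$ and hence $t\in T$, contradicting $t<t_{\ast}$. Therefore $H$ properly contains $E$, so $\dim H\geq 2$, and we have produced a compact face of dimension at least $2$ containing $E$, contradicting the looseness of $E$. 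The delicate point I expect to watch is the case analysis around the transition $t_{\ast}$; the dichotomy observation is the lever that makes this painless.
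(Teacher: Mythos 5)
Your argument is correct and is essentially the paper's own proof in different packaging: the paper interpolates linearly between a covector $\xi_0$ with $\Delta^{\xi_0}=E$ and the offending $\xi_1$, and uses the intermediate value theorem on $\psi(\xi)=\min_{v\in V\setminus\{a,b\}}\scalar{\xi}{v}-\scalar{\xi}{a}$ to locate the first parameter at which an extra vertex joins the face --- which is exactly your $t_*$ on the projectively identical ray $\xi+t\eta$. The only real difference is that you route the transition through semicontinuity of the face correspondence and the fact that a compact face properly containing an edge has dimension at least $2$; both claims are true (the first for $\xi_t$ sufficiently close to $\xi_{t_*}$, by finiteness of the vertex set; the second because $a$ and $b$ are extreme points of $H$, so $H$ cannot be a longer segment), but each deserves the one-line justification that the paper gets for free by arguing directly with the finitely many vertices.
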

	
	\begin{proof}
		Let $V$ be the  set of vertices of $\Delta$.  
		If $V=\{a,b\}$ then Lemma~\ref{Lem:edge} follows easily 
		since every point of $\Delta$ can be 
		written as $\lambda a+\mu b + z$ for some $z\in \bR_{\geq0}^n$ 
		and~$\lambda,\mu\geq 0$ such that $\lambda+\mu=1$.
		Hence in the rest of the proof assume that there exists a vertex of $\Delta$ different from $a$ or $b$ 
		and consider the function 
		$$\psi(\xi) =  \min_{v \in V\setminus\{a,b\}} \scalar{\xi}{v} - \scalar{\xi}{a},\quad
		\xi\in\mathbb{R}_{\geq0}^n.$$ 
		Since the set of vertices is finite, this function is well defined and continuous. 
		
		Since $E$ is a compact face of $\Delta$, there exists $\xi_0\in \mathbb{R}_{>0}^n$ such that 
		$\scalar{\xi_0}{a} = \scalar{\xi_0}{b}$ and 
		$\scalar{\xi_0}{c} > \scalar{\xi_0}{a}$  for all $c\in \Delta\setminus E$. 
		This yields $$\psi(\xi_0)>0. $$
		
		Suppose that there exist $c\in\Delta$ and $\xi_1\in\mathbb{R}_{\geq0}^n$ such that 
		$\scalar{\xi_1}{c} < \scalar{\xi_1}{a} = \scalar{\xi_1}{b}$.  
		Observe that  
		$c=\lambda_1v_1+\cdots+\lambda_s v_s+ z$, 
		for some $z\in\mathbb{R}_{\geq0}^n$, $v_1,\dots,v_s\in V$ and $\lambda_i\geq 0$ such that $\lambda_1+\cdots+\lambda_s=1$. If   $\scalar{\xi_1}{v} \geq \scalar{\xi_1}{a}$ for every vertex $v$ of $\Delta$, then   $\scalar{\xi_1}{c} =\lambda_1 \langle \xi_1,v_1\rangle + \cdots + \lambda_s\langle\xi_1, v_s\rangle +\langle \xi_1,z\rangle\geq (\lambda_1+\cdots+\lambda_s)\langle \xi_1,a\rangle =\langle \xi_1,a\rangle$, which contradicts the assumption $\scalar{\xi_1}{c} < \scalar{\xi_1}{a}$. Hence   $\scalar{\xi_1}{v} < \scalar{\xi_1}{a}$ for 
		at least one vertex $v\in V$. Thus $$\psi(\xi_1)<0.$$
		
		It follows from the above that there exist $\xi$ in the segment, joining $\xi_0$ and~$\xi_1$, 
		such that $\psi(\xi)=0$. 
		It means that there exists $v\in V\setminus\{a,b\}$ such that 
		$\scalar{\xi}{v}=\scalar{\xi}{a}=\scalar{\xi}{b}\leq \scalar{\xi}{d}$ for all $d\in\Delta$. 
		This implies that $\Delta^{\xi}$ is a compact (since $\xi\in\mathbb{R}_{>0}^n$) 
		face of dimension $\geq 2$, which contains $E$.
		Therefore, we obtained a contradiction to the hypothesis that $ E $ is a loose edge.
	\end{proof}
	
	\begin{lem}\label{Lem:only_vertices}
		Let $\Delta$ be a Newton polyhedron with a loose edge $E$ that has end points 
		$a=(a_1,\dots, a_n)$, $b=(b_1,\dots, b_n)$.
		If $\min(a_1,b_1)=\dots=\min(a_n,b_n)=0$, 
		then $a$ and $b$ are the only vertices of $\Delta$. \label{lc}
	\end{lem}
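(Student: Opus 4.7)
The plan is to establish the stronger claim that \emph{every} point $c\in\Delta$ dominates, coordinatewise, some point of the edge $E$: there is $\lambda\in[0,1]$ with $c\geq (1-\lambda)a+\lambda b$. Applied to an arbitrary vertex $v$ of $\Delta$, this gives $v\geq p$ where $p:=(1-\lambda)a+\lambda b$ lies in $E\subseteq\Delta$, and the standard fact that every vertex of a Newton polyhedron is minimal under the coordinatewise order forces $v=p$; since the only vertices on $E$ are its endpoints, $v=a$ or $v=b$. The minimality assertion is immediate from extremality: if $u\in\Delta$ satisfies $u\leq v$ and $u\neq v$, then $2v-u=v+(v-u)\in\Delta$ and $v=\tfrac12(u+(2v-u))$ contradicts the extremality of $v$.

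A harmless reduction comes first. Because $a$ and $b$ are distinct vertices of $\Delta\subseteq\bR_{\geq0}^n$, neither is the origin (else $\Delta=\bR_{\geq0}^n$ would have only one vertex). Combined with the hypothesis $\min(a_i,b_i)=0$, this partitions the coordinates into
\[I=\{i:a_i>0,\,b_i=0\},\quad J=\{j:a_j=0,\,b_j>0\},\quad K=\{k:a_k=b_k=0\},\]
where $I$ and $J$ are both nonempty and $\scalar{a}{b}=0$.

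For the main step I extract linear inequalities on $c$ from Lemma~\ref{Lem:edge} by plugging in the test vectors $\xi:=b_je_i+a_ie_j\in\bR_{\geq0}^n$ for each pair $(i,j)\in I\times J$. By construction $\scalar{\xi}{a}=\scalar{\xi}{b}=a_ib_j$, so Lemma~\ref{Lem:edge} yields $b_jc_i+a_ic_j\geq a_ib_j$, i.e.
\[\frac{c_i}{a_i}+\frac{c_j}{b_j}\geq 1.\]
This rearranges to $\max_{i\in I}(1-c_i/a_i)\leq\min_{j\in J}c_j/b_j$. Together with the trivial bounds $1-c_i/a_i\leq 1$ and $c_j/b_j\geq 0$, the interval $\bigl[\max_{i\in I}(1-c_i/a_i),\,\min_{j\in J}c_j/b_j\bigr]\cap[0,1]$ is nonempty, and any $\lambda$ inside it satisfies $c_i\geq(1-\lambda)a_i$ for $i\in I$, $c_j\geq\lambda b_j$ for $j\in J$, and $c_k\geq 0=(1-\lambda)a_k+\lambda b_k$ for $k\in K$, which is exactly the inequality $c\geq(1-\lambda)a+\lambda b$.

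The main obstacle is really only the search for the right family of test vectors to feed into Lemma~\ref{Lem:edge}. The clean choice $\xi=b_je_i+a_ie_j$ is what makes the argument go through: the orthogonality $\scalar{a}{b}=0$ forced by the hypothesis $\min(a_i,b_i)=0$ is precisely what guarantees $\scalar{\xi}{a-b}=0$. After that, the proof is a short interval-nonemptiness check combined with the standard vertex-minimality fact.
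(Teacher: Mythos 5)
Your proof is correct and follows essentially the same route as the paper: the same partition of coordinates into $I$, $J$, $K$, the same test vectors (your $\xi=b_je_i+a_ie_j$ is just $a_ib_j$ times the paper's $\xi_{ij}$) fed into Lemma~\ref{Lem:edge} to get $c_i/a_i+c_j/b_j\geq 1$, and the same conclusion that every $c\in\Delta$ coordinatewise dominates a point of $E$ and hence cannot be a vertex other than $a$ or $b$. The only differences are cosmetic: you select $\lambda$ via an interval-nonemptiness check where the paper uses $\min_{i}c_i/a_i+\min_{j}c_j/b_j\geq1$, and you spell out the vertex-minimality fact that the paper leaves implicit.
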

	
	\begin{proof}
		By assumption there exist two nonempty and disjoint subsets $A$, $B$ of the set of indices $\{1,\dots, n\}$ such that 
		$a_i>0$ for $i\in A$, $a_i=0$ for $i\in \{1,\dots, n\}\setminus A$,
		$b_j>0$ for $j\in B$ and $b_j=0$ for $j\in \{1,\dots, n\}\setminus B$.
		
		Let $c=(c_1,\dots,c_n)$  be an arbitrary point of $\Delta$ different from $a$ and $b$. 
		For any $i\in A$, $j\in B$ consider the vector $\xi_{ij}$, which has 
		only two nonzero entries: $1/a_i$ at place $i$ and  $1/b_j$ at place $j$
		($i\neq j$ since $A\cap B=\emptyset$). Then 
		$\scalar{\xi_{ij}}{a}=\scalar{\xi_{ij}}{b}=1$ and 
		$\scalar{\xi_{ij}}{c}=c_i/a_i+c_j/b_j$. 
		By Lemma~\ref{Lem:edge} we get $c_i/a_i+c_j/b_j \geq 1$.
		It follows that 
		$$ \min_{i\in A} c_i/a_i + \min_{j\in B} c_j/b_j \geq 1 .$$
		
		Choose constants  $\lambda\geq0$, $\mu\geq0$ such that 
		$\lambda\leq \min_{i\in A} c_i/a_i$, $\mu\leq \min_{j\in B} c_j/b_j$ 
		and $\lambda+\mu=1$.
		Then $c=\lambda a+\mu b+z$ for some $z\in \mathbb{R}_{\geq0}^n$, hence 
		$c$ cannot be a vertex of $\Delta$. 
	\end{proof}
	
	\begin{lem}\label{Lem:base}
		Let $c\in\bZ^n$ be a point with at least one positive and at least one negative coordinate. 
		Then there exists a basis $\xi_1$,\dots, $\xi_{n}$ of the vector space~$\bR^n$ such that 
		$\xi_i\in\bN^n$ for $i=1,\dots,n$ and $\scalar{\xi_i}{c}=0$ for $i=1,\dots,n-1$.
	\end{lem}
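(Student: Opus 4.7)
The plan is to exploit the elementary fact that whenever $c_i>0$ and $c_j<0$, the vector $-c_j\,e_i+c_i\,e_j$ has nonnegative integer entries and is orthogonal to $c$. Using a fixed ``pivot pair'' $(i_0,j_0)$ with $c_{i_0}>0$ and $c_{j_0}<0$, I will assemble exactly $n-1$ such vectors, verify that they are linearly independent, and then complete the basis by appending a standard basis vector outside $c^\perp$.

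Concretely, I would partition $\{1,\dots,n\}$ into the three subsets $P=\{i:c_i>0\}$, $N=\{j:c_j<0\}$, and $Z=\{k:c_k=0\}$; by hypothesis both $P$ and $N$ are nonempty, so I may fix $i_0\in P$ and $j_0\in N$. I then define $\xi_i:=-c_{j_0}\,e_i+c_i\,e_{j_0}$ for $i\in P\setminus\{i_0\}$; $\xi_j:=-c_j\,e_{i_0}+c_{i_0}\,e_j$ for $j\in N\setminus\{j_0\}$; $\xi_k:=e_k$ for $k\in Z$; and $\xi_{i_0}:=-c_{j_0}\,e_{i_0}+c_{i_0}\,e_{j_0}$. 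A quick count gives $(|P|-1)+(|N|-1)+|Z|+1=n-1$ vectors. The sign choices ensure every entry lies in $\bN$, and a one-line expansion of $\scalar{\xi_\bullet}{c}$ shows each of these vectors lies in $c^\perp$.

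For linear independence I would argue by inspection of supports, which are almost disjoint: each $\xi_k$ with $k\in Z$ is the only vector in the family with a nonzero entry at position $k$; each $\xi_i$ with $i\in P\setminus\{i_0\}$ is the only one nonzero at position $i$; and similarly for the $\xi_j$ with $j\in N\setminus\{j_0\}$. Hence in any putative linear relation among $\{\xi_i,\xi_j,\xi_k,\xi_{i_0}\}$, all coefficients except that of $\xi_{i_0}$ must vanish by a triangular elimination; the surviving coefficient of $\xi_{i_0}$ then vanishes because $\xi_{i_0}$ has the nonzero entry $-c_{j_0}\ne 0$ at position $i_0$. Finally, I append $\xi_n:=e_{i_0}\in\bN^n$: since $\scalar{\xi_n}{c}=c_{i_0}\neq 0$, this vector lies outside the hyperplane $c^\perp$ that contains $\xi_1,\dots,\xi_{n-1}$, and therefore extends them to a basis of $\bR^n$.

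The construction is essentially forced by the nonnegativity requirement (which rules out ``mixed-sign'' combinations in $c^\perp$), so the only step demanding a genuine argument is the linear-independence check; the almost-disjoint structure of the supports above makes that check routine, and I do not expect any serious obstacle.
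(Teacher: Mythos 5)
Your construction is correct, and it takes a genuinely different route from the paper. The paper proceeds algorithmically: it starts from the standard basis, repeatedly replaces some $\xi_k$ by $\xi_k+\xi_j$ so that the vector $w=(\scalar{\xi_1}{c},\dots,\scalar{\xi_n}{c})$ loses $\ell^1$-mass at each step, and stops when only one entry of $w$ is nonzero; termination of this loop is the only thing requiring an argument there. You instead write down the $n-1$ orthogonal vectors in closed form: $-c_{j_0}e_i+c_ie_{j_0}$ for $i\in P\setminus\{i_0\}$, $-c_je_{i_0}+c_{i_0}e_j$ for $j\in N\setminus\{j_0\}$, $e_k$ for $k\in Z$, and the pivot vector $-c_{j_0}e_{i_0}+c_{i_0}e_{j_0}$; your count $(|P|-1)+(|N|-1)+|Z|+1=n-1$ is right, the sign analysis gives membership in $\bN^n$, and the support argument for independence is sound (each non-pivot vector owns a coordinate position touched by no other vector in the family, and the pivot vector is then handled last). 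Completing with $e_{i_0}$, which has nonzero pairing with $c$, is also fine. What your approach buys is explicitness and a shorter verification (no termination argument); what the paper's approach buys is that its basis is obtained from the standard one by elementary integer operations, hence is automatically a $\bZ$-basis of the lattice $\bZ^n$ — a property the lemma does not actually require, so nothing is lost in your version.
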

	
	\begin{proof}
		For any basis $\xi_1$, \dots, $\xi_{n}$ of~$\bR^n$ set 
		$w=(w_1,\dots,w_n):=(\scalar{\xi_1}{c},\dots,\scalar{\xi_n}{c})$.
		By the hypothesis of the lemma, for the standard basis 
		$\xi_1=(1,0,\dots,0)$, \dots, $\xi_n=(0,\dots,0,1)$ of $\bR^n$ there exist $i,j\in\{1,\dots,n\}$
		such that $w_i$, $w_j$ have opposite signs.
		We will gradually modify the standard basis  until we reach a basis such that only one 
		coordinate of $w$ is nonzero. 
		Below we outline the algorithm. 
		
		\begin{itemize}
			\item[1.] If there are only two nonzero entries $w_j$, $w_k$ of $w$ and $w_j+w_k=0$, then 
			replace $\xi_k$ by $\xi_k+\xi_j$ and stop.
			\item[2.] If there are two nonzero entries $w_j$, $w_k$ of $w$ of opposite signs such that 
			$|w_j|<|w_k|$, then replace $\xi_k$ by $\xi_k+\xi_j$ and go to step~1, otherwise go to step 3.
			\item[3.] Choose three nonzero entries  $w_j$, $w_k$, $w_l$ of $w$ such that 
			$w_k=w_l=-w_j$ (such a choice is possible since in this step all nonzero entries 
			of $w$ have the same absolute value), replace $\xi_k$ by $\xi_k+\xi_j$ and go to step~1.
		\end{itemize}

		After every loop of the above algorithm $w_k$ is replaced by $w_k+w_j$. Hence 
		the number $\sum_{i=1}^n |w_i|$ decreases and the algorithm must terminate.
	\end{proof}
	
	We encourage the reader to apply the algorithm from the proof in a simple case, 
	for example for $c=(2,3,-4)$. 
	
	\medskip 
	From now on up to the end of this section we fix a loose edge $E$. 
	Let $c$ be a primitive lattice vector parallel to $E$.
	Applying Lemma~\ref{Lem:base} to $c$ we find ${n-1}$~linearly independent vectors
	$\xi_1$,\dots, $\xi_{n-1}\in \bN^n$ which are orthogonal to $E$.
	For every monomial $\underline{x}^{\alpha}$ we set 
	$\omega(\underline{x}^{\alpha}) := 
	(\scalar{\xi_1}{\alpha},\,\dots,\scalar{\xi_{n-1}}{\alpha})$. 
	We call this vector a {\it weight} of $\underline{x}^{\alpha}$. 
	Since 
	$\omega(\underline{x}^{\alpha}\underline{x}^{\beta}) =
	\omega(\underline{x}^{\alpha})+\omega(\underline{x}^{\beta})$ 
	for any monomials $\underline{x}^{\alpha}$, $\underline{x}^{\beta}$, 
	the ring $\mathbb{K}[x_1,\dots,x_n]$ turns into a graded ring 
	$\bigoplus_{w\in\bN^{n-1}} R_{w}$,
	where $R_{w}$ is spanned by monomials of weight $w$. 
	We have that $ R_{(\underline 0 )} = \bK$. Indeed, if $\underline{x}^{\alpha}\in R_{(\underline 0 )}  $ for some $\alpha\ne 0$ and $\beta$ is a point of $E$, then the points $\beta + t \alpha$, for $t>0$, belong to $E$, hence $E$ is unbounded.
	
	All monomials of $R_w$ are of the form $\underline{x}^{a+ic}$ where 
	$\underline{x}^{a}$ is a fixed monomial of $R_w$ and $i$ is an integer. This shows that $R_w$ is a finite-dimensional vector space over $\mathbb{K}$
	since there is only a finite number of integers $i$ such that all coordinates of $a+ic$ are nonnegative. 
	We denote by $ \dim R_w $ the dimension of $ R_w $ as a vector space over $ \bK $.
	It may happen that $\dim R_w=0$. 
	Take for example the weight $\omega(x_1^{\alpha_1}x_2^{\alpha_2})=3\alpha_1+2\alpha_2$.
	Then there is no  monomial of weight 1, hence $R_1\subset \mathbb{K}[x_1,x_2]$ is a zero-dimensional vector space. 
	
	Let $M\subset \bN^{n-1}$ be the set of weights satisfying the condition: 
	$z\in M$ if and only if there exists $\underline{x}^{\alpha}\in \bK[\underline{x}^{\pm}]$ 
	such that $\omega(\underline{x}^{\alpha})=z$ and $\scalar{\xi}{\alpha}\geq 0$ for all 
	$\xi\in\mathbb{R}_{\geq0}^n$ which are orthogonal to $E$. 
	Observe that $M$ is closed under addition. Moreover for any  $w\in \bN^{n-1}$ such that $\dim R_w>0$ we have $w\in M$.

	\begin{lem}\label{Lem:sum}
		Let $w\in\bN^{n-1}$ and $z\in M$. 
		Assume that~$R_w$ contains two coprime monomials. 
		Then $$\dim R_{w+z}=\dim R_w+ \dim R_z-1.$$\label{l1}
	\end{lem}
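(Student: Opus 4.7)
The plan is to use the concrete description of the graded pieces. Because $c$ is a primitive integer vector orthogonal to each $\xi_i$, any nonzero $R_u$ consists of monomials forming an arithmetic progression $\underline x^{\alpha_0}, \underline x^{\alpha_0 + c}, \dots, \underline x^{\alpha_0 + (\dim R_u - 1)c}$, where the \emph{bottom} $\alpha_0$ is characterized by $\alpha_0 - c \notin \bN^n$ and the \emph{top} by the analogous condition with $+c$. Compactness of $E$ forces $c$ to have both strictly positive and strictly negative entries, so both extremes exist and are well-defined.

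Next, I would extract the structural consequences of the coprimality hypothesis on $R_w$. Along the progression, each coordinate is monotone: strictly increasing where $c_k > 0$, strictly decreasing where $c_k < 0$, constant where $c_k = 0$. Hence any two coprime monomials of $R_w$ must be the bottom $\alpha_0$ and the top $\alpha_p$ (with $p := \dim R_w - 1$); moreover $\alpha_{0,k} = 0$ for every $k$ with $c_k \geq 0$ and $\alpha_{0,k} = -pc_k$ for every $k$ with $c_k < 0$. Separately, the hypothesis $z \in M$ gives a weight-$z$ representative $\beta \in \bZ^n$ with $\scalar{\xi}{\beta} \geq 0$ for every $\xi \in \bR_{\geq 0}^n \cap c^{\perp}$; since the dual cone of $\bR_{\geq 0}^n \cap c^{\perp}$ equals $\bR_{\geq 0}^n + \bR c$, one may shift $\beta$ by an integer multiple of $c$ into $\bN^n$. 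In particular $\dim R_z \geq 1$, and I denote the bottom and top of $R_z$ by $\beta_0$ and $\beta_q$, with $q := \dim R_z - 1$.

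Finally, I would verify that $\gamma_0 := \alpha_0 + \beta_0$ and $\gamma_0 + (p+q)c = \alpha_p + \beta_q$ are the bottom and top of $R_{w+z}$, respectively. For the bottom: since $\beta_0 - c \notin \bN^n$, some $k$ satisfies $c_k > 0$ and $\beta_{0,k} < c_k$; combined with $\alpha_{0,k} = 0$ this gives $(\gamma_0 - c)_k = \beta_{0,k} - c_k < 0$. The top is symmetric, using $\alpha_{p,k} = 0$ for every $k$ with $c_k \leq 0$ and the fact that $\beta_q + c \notin \bN^n$. All intermediate shifts $\gamma_0 + ic$, $0 \leq i \leq p + q$, lie in $\bN^n$ by a direct coordinate check using the explicit vanishings of $\alpha_0$, $\alpha_p$ together with $\beta_0, \beta_q \in \bN^n$. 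Consequently $R_{w+z}$ is spanned by $p + q + 1$ monomials, i.e.\ $\dim R_{w+z} = \dim R_w + \dim R_z - 1$. The main obstacle I anticipate is the clean case analysis on the signs of $c_k$ (especially the $c_k = 0$ case), together with the convex-geometric step of converting the membership $z \in M$ into the existence of an explicit representative $\beta_0 \in \bN^n$ of weight $z$ — the asymmetry between the strong coprimality hypothesis on $R_w$ and the weaker cone-theoretic condition on $z$ is what makes this lemma subtle.
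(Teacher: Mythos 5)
Your description of the graded pieces as arithmetic progressions $\underline x^{\alpha_0},\dots,\underline x^{\alpha_0+(\dim R_u-1)c}$, the identification of the two coprime monomials of $R_w$ as the bottom and top of $l_w$, and the verification that $\alpha_0+\beta_0$ and $\alpha_p+\beta_q$ are the extremes of $l_{w+z}$ are all correct, and in the case $\dim R_z\geq 1$ this is essentially the paper's argument. The gap is in your treatment of the hypothesis $z\in M$. The dual-cone computation correctly shows that a weight-$z$ representative $\beta\in\bZ^n$ can be written as $\gamma+tc$ with $\gamma\in\bR_{\geq 0}^n$, but $t$ is only a \emph{real} number; you cannot in general shift $\beta$ by an \emph{integer} multiple of $c$ into $\bN^n$. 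Consequently the claim ``in particular $\dim R_z\geq 1$'' is false, and with it the existence of your $\beta_0$ and $\beta_q$. A concrete counterexample already appears in the paper: for $n=2$ and $\omega(x_1^{a_1}x_2^{a_2})=3a_1+2a_2$ (so $c=(2,-3)$), one has $1\in M$ but $R_1=0$; the segment $l_1$ runs from $(1/3,0)$ to $(0,1/2)$ and contains no lattice point, and the admissible shifts of $\beta=(1,-1)$ form the interval $t\in[1/3,1/2]$, which contains no integer.

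This missing case $\dim R_z=0$ is not a boundary technicality: it is where the lemma asserts $\dim R_{w+z}=\dim R_w-1$, i.e.\ that adding the weight $z$ \emph{destroys exactly one} lattice point of $l_w$, and it is the half of the statement on which the paper spends most of its proof. To close the gap you must argue directly with a representative $d\in\bZ^n$ of weight $z$ whose nonnegativity interval $\{t:d+tc\geq 0\}$ contains no integer: normalize $d$ so that $d_{i_0}<0$ for some $i_0$ with $c_{i_0}>0$ while $d_i+c_i\geq 0$ for all such $i$, use the membership $z\in M$ (tested against the vectors $\tfrac{1}{c_{i_0}}e_{i_0}-\tfrac{1}{c_j}e_j$ and against $e_j$ for $c_j=0$) to force $d_j>0$ where $c_j<0$ and $d_j\geq 0$ where $c_j=0$, and then check that among the points $a+d+ic$ exactly the $r$ points with $1\leq i\leq r$ lie in $\bR_{\geq 0}^n$ (where $r+1=\dim R_w$). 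Alternatively, one can avoid the case split by counting integers in the real intervals $[0,m]$, $[t_1,t_2]$, $[t_1,t_2+m]$ parametrizing $l_w$, $l_z$, $l_{w+z}$, as in the paper's appendix; the point is that $l_z$ being a nonempty segment does not mean it contains a lattice point, and your count must remain valid when it does not.
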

	
	\begin{proof}
		For any $v\in\bN^{n-1}$ 
		the dimension of the vector space $R_v$ is equal to the number of 
		monomials ${\underline x}^{\alpha}$ such that $\omega({\underline x}^{\alpha})=v$, hence 
		is equal to the number of lattice points, i.e. all points with integer coordinates, in the set 
		$$l_v = \{\,\alpha\in \bR_{\geq 0}^n : 
		(\scalar{\xi_1}{\alpha},\dots, \scalar{\xi_{n-1}}{\alpha})=v\,\} . $$
		Notice that $l_v$ is the intersection of the straight line 
		$\{a+tc: t\in \mathbb{R}\}$ with $\bR_{\geq 0}^n$, where $\omega({\underline x}^{a})=v$ and 
		$c$ is a primitive lattice vector parallel to the edge $E$.

		By the assumption, $R_w$ contains two coprime monomials 
		${\underline x}^{a}$ and ${\underline x}^{b}$. 
		Hence $\min(a_1,b_1)=\dots=\min(a_n,b_n)=0$ which yields the partition 
		of $\{1,\dots,n\}$ to three sets 
		$A=\{i\in\{1,\dots,n\}: a_i=0, b_i>0\}$, 
		$B=\{i\in\{1,\dots,n\}: a_i>0, b_i=0\}$ and
		$C=\{i\in\{1,\dots,n\}: a_i=0, b_i=0\}$. 
		Since $A$ and $B$ are nonempty,  $a$ and $b$ 
		are the endpoints of the segment $l_w$. 
		We may assume without loss of generality that the vector $c$ is pointed 
		in the direction of $b-a$. If  $\dim R_w=r+1$, then 
		the lattice points  of $l_w$ are $a$, $a+c$, \dots, $b=a+rc$.
		
		If the lattice points of $l_z$  are 
		$d$, $d+c$, \dots, $d+s c$ then
		$a+d$, $a+d+c$, \dots, $a+d+(r+s)c$ are the lattice points of $l_{w+z}$
		(see Figure 5). It is easy to check that the points $a+d-c$ and $a+d+(r+s+1)c$ do not belong to $\bR_{\geq 0}^n$. Hence $\dim R_{w+z}=r+s$.  This ends the proof in the case $\dim R_z>0$. 
		
		\begin{center}
			\begin{tikzpicture}[
			scale = 1,
			foreground/.style = {  thick },
			background/.style = { dashed },
			line join=round, line cap=round
			]
			\draw[foreground,->] (0,0)--+(6,0);
			\draw[foreground,->] (0,0)--+(0,3.5);
			\draw[thick] (5,0)--(4.5,0.25);
			\draw[dashed] (4.5,0.25)--(1.25,1.875);
			\draw[thick] (1.25,1.875)--(0,2.5);
			\draw (1,-0.15) node[below] {$ $};
			\draw (-0.15,1) node[left] {$ $};
			\node[draw,circle,inner sep=1pt,fill=black] at (0,2.5) {};
			\node[draw,circle,inner sep=1pt,fill=black] at (1,2) {};
			\node[draw,circle,inner sep=1pt,fill=black] at (5,0) {};
			\node at (0.3,2.6) {$a$}; 
			\node at (1.5,2.1) {$a+c$}; 
			\node at (5.4,0.2) {$a+rc$};   
			
			[
			scale = 1,
			foreground/.style = {  thick },
			background/.style = { dashed },
			line join=round, line cap=round
			]
			\draw[foreground,->] (6.5,0)--+(4.5,0);
			\draw[foreground,->] (6.5,0)--+(0,2.75);
			\draw[thick] (10,0)--(9.5,0.25);
			\draw[dashed] (9.5,0.25)--(6.5,1.75);
			\draw[thick] (6.5,1.75)--(8.25,0.875);
			\draw (1,-0.15) node[below] {$ $};
			\draw (-0.15,1) node[left] {$ $};
			\foreach \x in{} {
				\foreach \y in{}{
					\draw[fill, opacity=0.9]  (\x,\y) circle (0.5pt);
				}
			}
			\node[draw,circle,inner sep=1pt,fill=black] at (7,1.5) {};
			\node[draw,circle,inner sep=1pt,fill=black] at (8,1) {};
			\node[draw,circle,inner sep=1pt,fill=black] at (9.7,0.15) {};
			\node at (7.3,1.7) {$d$};
			\node at (8.5,1.2) {$d+c$};
			\node at (10.3,0.3) {$d+sc$}; 
			
			\end{tikzpicture}
		\end{center}
		
		\begin{center}
			\begin{tikzpicture}
			[
			scale = 1,
			foreground/.style = {  thick },
			background/.style = { dashed },
			line join=round, line cap=round
			]
			\draw[foreground,->] (0,0)--+(10,0);
			\draw[foreground,->] (0,0)--+(0,5);
			\draw[thick] (0,4.25)--(2.75,2.875);
			\draw[dashed] (2.75,2.875)--(8,0.25);
			\draw[thick] (8.5,0)--(8,0.25);
			\draw (1,-0.15) node[below] {$ $};
			\draw (-0.15,1) node[left] {$ $};
			\node[draw,circle,inner sep=1pt,fill=black] at (0.5,4) {};
			\node[draw,circle,inner sep=1pt,fill=black] at (1.5,3.5) {};
			\node[draw,circle,inner sep=1pt,fill=black] at (2.5,3) {};
			\node[draw,circle,inner sep=1pt,fill=black] at (8.2,0.15) {};
			\node at (1,4.2) {$a+d$}; 
			\node at (2.3,3.7) {$a+d+c$};
			\node at (3.35,3.2) {$a+d+2c$};
			\node at (9.5,0.34) {$a+d+(r+s)c$};
			\node [below=0.6cm, align=flush center,text width=6cm] at (4.5,0)    {Fig.\ 5};
			\end{tikzpicture}
		\end{center}
		
		Now suppose that $\dim R_z=0$. Let ${\underline x}^{d}\in\bK[\underline{x}^{\pm}]$ be any monomial  
		with integer exponents such that  $\omega({\underline x}^{d})=z$.
		Replacing this monomial by ${\underline x}^{d+kc}$, for suitably chosen integer $k$, 
		we may assume that  $d_{i_0}<0$ for some  $i_0\in A$ and 
		$d_i+c_i\geq 0$  for all $i\in A$. 
		
		Take the vector $v_j=\frac{1}{c_{i_0}}e_{i_0}-\frac{1}{c_j}e_j$, where  $e_1,\dots$, $e_n$ is the standard basis of $\bR^n$  and $j\in B$. Then $v_j\in \bR_{\geq 0}^n$ is orthogonal to $E$. By the assumption that $z\in M$ 
		(use $ \xi = v_j $ and $ \alpha = d $ in the definition of $ M $)
		we get the inequality
		$d_{i_0}/c_{i_0}-d_j/c_j  = \scalar{v_j}{d} \geq 0$. Hence $d_j> 0$ for all $j\in B$,
		since, by the definitions of $ A, B $ and the convention that $ c $ points into the direction of $ b - a $, we have $ c_j < 0 $ for $ j \in B $ and $ c_{i_0} > 0 $ for $ i_0 \in A $, and since $ d_{i_0} <0 $, by assumption. 
		
		If $j\in C$ then $e_j$ is orthogonal to $E$. 
		By the assumption that $z\in M$ 
		(use $ \xi = e_j $ and $ \alpha = d $ in the definition of $M$)
		we get the inequality $d_j\geq 0$ 
		for all $j\in C$.
		
		Notice that $d_{j}+c_j<0$ for at least one $j\in B$, otherwise all exponents of ${\underline x}^{d+c}$ 
		would be nonnegative,
		which would contradict $ \dim R_z = 0 $. 
		
		The lattice points of $l_{w+z}$ are of the form $a+d+ic$, where $i\in \bZ$. 
		Since $d_{i_0}<0$, the point $a+d$ has a negative coordinate. 
		Since $d_{j}+c_j<0$ for some $j\in B$, the point $a+d+(r+1)c=b+d+c$ has a negative coordinate. 
		We claim that 
		all coordinates of the points $a+d+ic$ are nonnegative, for $i=1,\dots,r$:
			If $ j \in A $, then we have seen that $ d_j + c_j \geq 0 $ and $ c_j > 0 $,
			and hence, $ a_j + d_j + i c_j = a_j + (c_j + d_j) + (i-1) c_j \geq 0 $, for $ i \geq 1$.
			On the other hand, if $ j \in C $ then $ a_j = b_j = 0 $, thus $ c_j = 0 $, and we have seen that $ d_j \geq 0 $.
			This provides $ a_j + d_j + i c_j =  d_j \geq 0 $, for $ j \in C  $.
			Finally, for $ j \in B $, we have $ d_j > 0 $ and $ b_j = 0 $, and since $ c = (b-a)/r$,
			we have that $ a_j + d_j + i c_j =  a_j + d_j - i a_j/r = (1 - i/r) a_j + d_j > 0 $, for $ i = 1, \ldots, r $.
			This proves the claim.
		Hence $\dim R_{z+w}=r$ which finishes the proof. 
	\end{proof}

	In the appendix, we outline an alternative proof for Lemma~\ref{Lem:sum}.
	
	\begin{lem} \label{l3}
		Let $G\in R_w$ and $H\in R_z$ be coprime polynomials. 
		If $G$ is not divisible by any monomial  then for every $i\in M$ 
		$$ G R_{z+i} + H R_{w+i} = R_{w+z+i}.
		$$
	\end{lem}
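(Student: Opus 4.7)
The plan is to interpret the claimed equality as surjectivity of the single $\bK$-linear map
$$\phi\colon R_{z+i}\oplus R_{w+i}\longrightarrow R_{w+z+i},\qquad \phi(A,B)=GA+HB,$$
whose image is exactly $GR_{z+i}+HR_{w+i}$. The inclusion $\mathrm{image}\,\phi\subseteq R_{w+z+i}$ is immediate from the grading, so it suffices to check that $\dim_\bK\mathrm{image}\,\phi=\dim_\bK R_{w+z+i}$; the rest will follow from rank--nullity.

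For the kernel, if $GA+HB=0$ in the unique factorization domain $\bK[x_1,\dots,x_n]$, then coprimality of $G$ and $H$ forces $B=GC$ and $A=-HC$ for some polynomial $C$. Homogeneity in the grading places $C$ in $R_i$, and conversely any $C\in R_i$ yields a kernel element. Hence $\ker\phi\cong R_i$ as $\bK$-vector spaces.

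The conceptual heart of the argument is then to invoke Lemma~\ref{Lem:sum} with the weight $w$, which requires that $R_w$ contain two coprime monomials. I claim that the endpoints $a,b$ of the segment $l_w$ work. Indeed, if for some index $k$ one had both $a_k>0$ and $b_k>0$, then, since the $x_k$-exponent along $l_w$ is affine in the parameter $t$ and thus monotone, it would be strictly positive at every lattice point of $l_w$; so $x_k$ would divide every monomial of $R_w$, including $G$, contradicting the hypothesis on~$G$. Thus $\min(a_k,b_k)=0$ for every $k$, exhibiting $\underline x^{a}$ and $\underline x^{b}$ as coprime monomials in $R_w$. Applying Lemma~\ref{Lem:sum} to $w$ with the weights $i\in M$ and $z+i\in M$ (using that $z\in M$ because $0\ne H\in R_z$, so $z+i\in M$ by closure of $M$ under addition) gives
\begin{align*}
\dim R_{w+i}&=\dim R_w+\dim R_i-1,\\
\dim R_{w+z+i}&=\dim R_w+\dim R_{z+i}-1.
\end{align*}
Combined with the rank--nullity identity $\dim\mathrm{image}\,\phi=\dim R_{z+i}+\dim R_{w+i}-\dim R_i$, this yields $\dim\mathrm{image}\,\phi=\dim R_{w+z+i}$, and hence equality of subspaces. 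The main obstacle is really the coprimality observation for the endpoints of $l_w$; once that is in place, everything else is a clean dimension count.
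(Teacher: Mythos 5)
Your argument is, in substance, the paper's own proof: the same map $\phi(A,B)=GA+HB$, the same identification of $\ker\phi$ with $R_i$ via coprimality in the UFD $\bK[x_1,\dots,x_n]$, and the same double application of Lemma~\ref{Lem:sum} (to $w$ with $i\in M$ and with $z+i\in M$) to turn rank--nullity into surjectivity. You are even more careful than the paper in recording why $z\in M$ and hence $z+i\in M$.

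The one step that does not quite work as written is your verification that $R_w$ contains two coprime monomials. You take $a,b$ to be the endpoints of the \emph{real} segment $l_w=\{a+tc\}\cap\bR_{\geq0}^n$, but these endpoints need not be lattice points, so $\underline x^{a}$ and $\underline x^{b}$ need not be monomials at all, and the conclusion ``$\underline x^a,\underline x^b$ are coprime monomials of $R_w$'' is then meaningless. The fix is immediate and preserves your argument verbatim: run the same affineness/monotonicity reasoning with the two \emph{extreme lattice points} of $l_w$ (equivalently, with the two extreme monomials actually occurring in $G$, which is what the paper does): if some coordinate were positive at both extreme lattice points, it would be positive at every lattice point in between, so $x_k$ would divide $G$. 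Note also that this, like the paper's proof, tacitly assumes $G$ is not a constant (so that $l_w$ has at least two lattice points); for constant $G$ the lemma is trivial since $G$ is a unit, so this is only a degenerate case to mention, not a real obstruction.
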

	
	\begin{proof}
		Consider the following  sequence 
		$$ 0\longrightarrow{} R_i\stackrel{\Phi}{\longrightarrow} 
		R_{z+i}\times R_{w+i} \stackrel{\Psi}{\longrightarrow} R_{w+z+i},
		$$ 
		where 
		$\Phi\colon \eta \mapsto (\eta H,-\eta G)$ and $\Psi\colon (\psi,\varphi)\mapsto \psi G+\varphi H$. Take any $(\psi,\varphi)\in R_{z+i}\times R_{w+i}$ such  that $\Psi(\psi,\varphi)=\psi G+ \varphi H=0$. Since $G$ and $H$ are coprime, there exists $\eta \in R_i$ such that $\psi=\eta H$ and $\varphi = - \eta G$. Thus $\ker \Psi\subset {\rm im}\, \Phi$. The opposite inclusion is obvious. 
		Moreover, it is easy to see that $\Phi$ is injective. 
		It follows that the above sequence is exact.

		By assumption $ G \in R_w \subset \bK [x_1, \ldots, x_n] $ is neither a constant nor a monomial.
		Hence there appear at least two monomials in $ G $ with non-zero coefficient
		and they can be chosen to be coprime as $ G $ is not divisible by a monomial.
		Since both monomials are contained in $ R_w $, we obtain
		that $R_w$ satisfies the hypothesis of Lemma~\ref{l1}.
		Hence we get 
		$\dim {\rm Im}\,\Psi = \dim R_{z+i}+\dim R_{w+i} - \dim R_i = 
		\dim R_{z+i}+ (\dim R_w+\dim R_i - 1) - \dim R_i = 
		\dim R_w+\dim R_{z+i} -1 = \dim R_{w+z+i}$, 
		which implies that $\Psi$ is surjective.
	\end{proof}
	
	\smallskip 
	
	\begin{proof}[Proof of Theorem \ref{t1}]
		Suppose that $ R =  \bK[\![x_1,\dots,x_n]\!] $.
		Then, $ \widehat R = R $ and $ f \in \bK[\![\underline x]\!] $ can be written as $ f = \sum_{ \alpha  \in \bN^n} a_\alpha \underline{x}^\alpha $, with coefficients $ a_\alpha \in \bK $.

		Since all monomials of  $f|_E$ have the same weight, the polynomial $f|_E$ is homogeneous in a graded ring $\mathbb{K}[x_1,\dots,x_n]$.
		Then by an elementary property of graded rings the equality $f|_E=GH$ implies that  $G\in R_{w}$ and $H\in R_{z}$ 
		for some $w$, $z\in \bN^{n-1}$. 
		Let ${\underline x}^{\alpha}$ be any monomial which appears 
		with nonzero coefficient in the power series~$f$ 
		and ${\underline x}^{\alpha_0}$ be a fixed monomial of $f|_E$. 
		By Lemma~\ref{Lem:edge} we have $\scalar{\xi}{\alpha} \geq \scalar{\xi}{\alpha_0}$
		for every $\xi\in\mathbb{R}_{\geq0}^n$ which is orthogonal to $E$. 
		This yields $\omega({\underline x}^{\alpha-\alpha_0})\in M$. 
		Since $\omega({\underline x}^{\alpha_0})=w+z$, 
		we get ${\underline x}^{\alpha}\in R_{w+z+i}$ for some $i\in M$ 
		and hence $f=\sum_{i\in M} f_{w+z+i}$, where $f_{w+z+i}\in R_{w+z+i}$.

		Let $g_w:=G$ and $h_{z}:=H$.  Then $f_{w+z}=g_{w}h_{z}$. 
		Let us set  $M$ in degree-lexicographic order.
		Using Lemma~\ref{l3} we can find recursively 
		$g_{w+i}\in R_{w+i}$ and $h_{z+i}\in R_{z+i}$ 
		such that 
		$$g_{w}h_{z+i}+h_{z}g_{w+i}=f_{w+z+i}-F_i,
		$$ 
		where 
		$$F_i=\sum_{\substack{k+l=i,\\k,l<i}}g_{w+k}h_{z+l}.$$ 
		If $g:=\sum_{i\in M} g_{w+i}$ 
		and $h:=\sum_{i\in M} h_{z+i}$, then $f=gh$.
		
		Let $\xi=\xi_1+\dots+\xi_{n-1}$,
		where $ \xi_1, \ldots, \xi_{n-1}, \xi_n \in \bN^n $ is the basis of $ \bR^n $ obtained in Lemma~\ref{Lem:base}.
		Then for  $E_1:=\Delta(g)^{\xi}$ and 
		$E_2:=\Delta(h)^{\xi}$ we have $g|_{E_1}=g_{w}$, $h|_{E_2}=h_{z}$ 
		and $E=E_1+E_2$. 
	\end{proof}

	\begin{proof}[Proof of Corollary \ref{w1}]
		Assume that $a=(a_1,\dots, a_n)$, $b=(b_1,\dots,b_n)$ 
		are the ends of a loose edge $E$ of the Newton polyhedron $\Delta(f)$. 
		Since $\Delta(f)$ has at least three vertices,  Lemma \ref{lc} implies that 
		$c=(\min(a_1,b_1),\dots,\min(a_n,b_n))$ has at least one nonzero coordinate.
		The monomials $\underline{x}^{a}$ and $\underline{x}^{b}$ 
		appear in the polynomial $f|_E$ with nonzero coefficients and their greatest common divisor equals
		$\underline{x}^{c}$.
		Thus $ G := \underline{x}^{-c}\cdot f|_E$ is not divisible  by any variable, 
		so $ G $ 
		and $ H := \underline{x}^c$ are relatively prime.
		Using Theorem \ref{t1} we obtain that $f$ is not irreducible.
	\end{proof}
	
	\begin{lem} \label{l4}
		Let $G\in R_w$ and $H_j\in R_{z_j}$ for $j=1,2$. 
		Assume that for every $i\in M$ 
		$$ G R_{z_j+i} + H_j R_{w+i} = R_{w+z_j+i}
		$$
		for $j=1,2$. Then for every $i\in M$ 
		$$ G R_{z_1+z_2+i} + H_1H_2 R_{w+i} = R_{w+z_1+z_2+i}.
		$$
	\end{lem}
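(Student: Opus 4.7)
The plan is to prove the statement by a two-step application of the hypotheses, essentially a substitution argument that uses the $j=1$ case to reduce, and then the $j=2$ case to clean up the remainder. The nontrivial inclusion is $R_{w+z_1+z_2+i} \subseteq G R_{z_1+z_2+i} + H_1H_2 R_{w+i}$; the reverse inclusion is automatic from the fact that multiplication respects the $ M $-grading.

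First, I would note that since the hypothesis on $H_j$ presupposes $H_j \neq 0$ (otherwise the equation $G R_{z_j+i}+0 = R_{w+z_j+i}$ would force $G$ to generate everything, but in any case the nontrivial statement is the one we use), we have $\dim R_{z_j} > 0$, so $z_1,z_2 \in M$. Combined with the closure of $M$ under addition, $z_2 + i \in M$ whenever $i \in M$. Thus the hypothesis for $j=1$ may be applied with $i$ replaced by $z_2+i$, giving
\[
G R_{z_1+z_2+i} + H_1 R_{w+z_2+i} = R_{w+z_1+z_2+i}.
\]

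Now, take any $f \in R_{w+z_1+z_2+i}$. By the displayed equality, write $f = G r_1 + H_1 s_1$ with $r_1 \in R_{z_1+z_2+i}$ and $s_1 \in R_{w+z_2+i}$. Next, applying the hypothesis for $j=2$ at level $i$, we have $s_1 = G r_2 + H_2 s_2$ for some $r_2 \in R_{z_2+i}$ and $s_2 \in R_{w+i}$. Substituting back,
\[
f = G r_1 + H_1 (G r_2 + H_2 s_2) = G\,(r_1 + H_1 r_2) + H_1 H_2\, s_2.
\]
Since $H_1 r_2 \in R_{z_1} \cdot R_{z_2+i} \subseteq R_{z_1+z_2+i}$, the element $r_1 + H_1 r_2$ lies in $R_{z_1+z_2+i}$, while $s_2 \in R_{w+i}$, so $f \in G R_{z_1+z_2+i} + H_1 H_2 R_{w+i}$, as required.

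There is essentially no obstacle here: the argument is a telescoping use of the two given surjectivity statements, and the only point that requires a brief check is that $z_2 + i \in M$ so that the $j=1$ hypothesis applies at shifted index. Once that is observed, the rest is immediate substitution, and no further appeal to Lemma~\ref{l1} or Lemma~\ref{l3} is needed.
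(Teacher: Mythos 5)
Your proof is correct and is essentially the paper's argument (two successive applications of the hypotheses, one at the shifted index $z_2+i$ and one at $i$), merely written element-wise instead of as a chain of set identities. The only difference is that you explicitly justify $z_2+i\in M$ via $\dim R_{z_2}>0$, a point the paper's proof leaves implicit; this is a reasonable extra check, not a divergence.
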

	
	\begin{proof} By assumptions of the lemma we get
	\begin{eqnarray*}
			 G R_{z_1+z_2+i} + H_1H_2 R_{w+i} &= &
			G (R_{z_1+z_2+i}+H_1 R_{z_2+i}) + H_1H_2 R_{w+i}  \\
			&=&  G R_{z_1+z_2+i} + H_1 ( G R_{z_2+i} + H_2 R_{w+i} ) \\ 
			&=& G R_{z_1+z_2+i} + H_1R_{w+z_2+i} = R_{w+z_1+z_2+i} \,.
			\hspace{30pt}\qedhere  
		\end{eqnarray*}
	\end{proof}

	Let us consider the case that $f\in R[y] $ is a polynomial with coefficients in $ R $, 
	for some independent variable $ y $.
	Assume that $E \subset \bR^{n+1}_{\geq 0} $ is a descendant loose edge of $ \Delta(f)  $. 
	As before, we have $\bK[x_1,\dots,x_n][y] \cong \bigoplus_{w\in\bN^{n}} R_{w}$ 
	(with the analogous notations).
	By definition, there exists  a lattice vector  
	$c=(c_1,\dots,c_n, c_{n+1})$ parallel to $E$ such that $c_i\geq 0$ for $i=1,\dots,n$ and $c_{n+1}<0$.
	Let $R_w':=R_w\cap \bK[x_1,\dots,x_n]$ for $w\in\bN^{n}$. Since every line parallel to
	$E$ intersects $\bR^{n}\times \{0\}$ transversely, the dimension of  $R_w'$ is either 0 or 1. 
	
	We claim that for every $w\in M$, $z\in \bN^{n}$, $0\neq H\in R_z'$ and $\psi\in R_{w+z}'$  
	one has $\psi/H\in R_w'$.
	To prove this claim it is enough to consider $H=\underline{x}^{\alpha}\in  R_z'$ 
	and $\psi=\underline{x}^{\beta}\in R_{w+z}'$. Then $\omega(\underline{x}^{\beta-\alpha})=w$.
	Denote $v_i:=e_i-\frac{c_i}{c_{n+1}}e_{n+1}\in\mathbb{R}_{\geq 0}^{n+1}$, where $e_1,\dots,e_n, e_{n+1}$ is the standard basis of $\mathbb{R}^{n+1}$. 
	Since every vector $v_i$ is orthogonal to  $E$ and $w\in M$, we have $0\leq\langle v_i,\beta-\alpha\rangle = \beta_i-\alpha_i$ for $i=1,\dots,n$. 
	Thus  $\psi/H=\underline{x}^{\beta-\alpha}$ is a monomial with nonnegative exponents. 
	
	\begin{lem} \label{l5}
		Let $G\in R_w$ and $H\in R_z$ be coprime polynomials. 
		If $G$ is monic with respect to $y$,
		then for every $i\in M$ 
		\begin{equation} \label{eq2}
		G R_{z+i} + H R_{w+i} = R_{w+z+i}.
		\end{equation}
	\end{lem}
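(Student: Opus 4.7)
The plan is to factor $H$ via Lemma~\ref{l4} and prove the identity piece by piece. Write $H = H_1 \cdot H_2$, where $H_1$ is the product of all variables (with multiplicity) dividing $H$ and $H_2$ is not divisible by any variable. Both factors are coprime with $G$, and by iteratively applying Lemma~\ref{l4}, further splitting $H_1$ into single-variable factors, it suffices to establish the identity in two cases: (a) $H$ is not divisible by any variable, and (b) $H = v$ is a single variable.

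In case (a), I would apply Lemma~\ref{l3} with the roles of $G$ and $H$ swapped: the hypothesis ``not divisible by any monomial'' now holds for the new ``$G$'' (namely the old $H$), and coprimality is symmetric, so Lemma~\ref{l3} immediately gives the identity. In case (b), given $\phi \in R_{w+z+i}$, I would construct $Q \in R_{z+i}$ such that $(\phi - GQ)|_{v=0} = 0$; then $v$ divides $\phi - GQ$, and $P := (\phi - GQ)/v$ lies in $R_{w+i}$, yielding $\phi = GQ + vP$. Setting $\bar G := G|_{v=0}$ (nonzero by coprimality), the condition on $Q$ amounts to $\phi|_{v=0} = \bar G \cdot Q|_{v=0}$ in $\bK[x_1,\dots,x_n,y]/(v)$. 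For $v = y$, the reduction $\bar G$ lies in $R'_w$, which is at most one-dimensional, so $\bar G$ is a scalar multiple of a single monomial $\underline{x}^\gamma$; the claim $\psi/H \in R'_w$ stated right before the lemma, applied to $\psi = \phi|_{y=0} \in R'_{w+z+i}$ and $H = \underline{x}^\gamma$, yields $\bar G \mid \phi|_{y=0}$. For $v = x_k$, the reduction $\bar G$ is monic in $y$ of degree $d$, so Euclidean division in $y$ gives $\phi|_{x_k=0} = \bar G \bar Q + \bar R$ with $\deg_y \bar R < d$, and one must show $\bar R = 0$.

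To see that $\bar R = 0$: by weight-homogeneity $\bar R$ lies in the weight-$(w+z+i)$ piece of $\bK[x_1,\dots,\hat x_k,\dots,x_n,y]$ with $y$-degree $< d$, and the geometric claim is that no such monomial exists. Assuming $c_k > 0$ (the case $c_k = 0$ is immediate because multiplication by $x_k$ is then a bijection of weighted pieces), starting from the natural preimage of $w+z+i$ given by $(0,d) + e_k + (\gamma, k') \in \bN^{n+1}$ for a preimage $(\gamma, k')$ of $i$, the shift along $-c$ required to reduce the $x_k$-coordinate to zero raises the $y$-coordinate by $(1+\gamma_k)|c_{n+1}|/c_k > 0$, pushing it strictly above $d$. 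The main obstacle is to justify this geometric claim cleanly for every $i \in M$, including edge cases where $i$ admits only a Laurent preimage (rather than one in $\bN^{n+1}$); in such cases one either shifts the preimage along $c$ to bring it into $\bN^{n+1}$ or directly notes that the corresponding $R$-spaces vanish and the identity holds trivially.
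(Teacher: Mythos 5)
Your overall skeleton --- factor $H$ into a monomial part and a part not divisible by any variable, reduce via Lemma~\ref{l4} to base cases, and dispose of the non-monomial factor by Lemma~\ref{l3} with the roles of $G$ and $H$ exchanged --- is exactly the paper's strategy (its proof reads ``all remaining cases follow from Lemma~\ref{l3} and Lemma~\ref{l4}'' after a two-line seed case). Where you genuinely diverge is your case (b). The paper only ever proves the seed case $G=y$, $H\in R_z'$ directly; every other pair $(G,v)$ is handled by also factoring $G=y^eG_0$ and applying Lemma~\ref{l4} to the \emph{first} slot (legitimate, since the identity is symmetric under $(G,w)\leftrightarrow(H,z)$), so that $(G_0,v)$ falls to Lemma~\ref{l3} and $(y,v)$ to the seed case. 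You instead attack $(G,v)$ head-on for a general monic $G$ via Euclidean division. Your $v=y$ subcase is fine (essentially the paper's seed case in disguise; note you should check $z+i\in M$ before invoking the $\psi/H$ claim, which holds since $\omega(y)\in M$ and $M$ is closed under addition), but the $v=x_k$ subcase is new relative to the paper and is exactly where your argument has a gap.

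The gap: for $v=x_k$ with $c_k>0$ your lattice computation presumes a preimage $(\gamma,k')\in\bN^{n+1}$ of $i$, and neither of your fallbacks for the Laurent-only case is sound. Shifting along $c$ into $\bN^{n+1}$ is impossible precisely when $\dim R_i=0$, and $\dim R_i=0$ does \emph{not} force $R_{w+z+i}=0$ --- the proof of Lemma~\ref{Lem:sum} treats exactly the situation $\dim R_z=0$ with $\dim R_{w+z}>0$ --- so ``the identity holds trivially'' fails there. The geometric claim itself is true, and the correct argument runs your computation on the Laurent representative $\alpha$ of $i$ using the defining inequality of $M$: the vector $v_k=e_k-\frac{c_k}{c_{n+1}}e_{n+1}$ is orthogonal to $E$ and nonnegative, so $\scalar{v_k}{\alpha}\geq 0$, hence $\alpha_{n+1}-\frac{c_{n+1}}{c_k}\alpha_k=-\frac{c_{n+1}}{c_k}\scalar{v_k}{\alpha}\geq 0$; then for $\beta=(0,\dots,0,d)+e_k+\alpha+tc$ the condition $\beta_k=0$ gives $t=-(1+\alpha_k)/c_k$ and therefore $\beta_{n+1}=d+\bigl(\alpha_{n+1}-\tfrac{c_{n+1}}{c_k}\alpha_k\bigr)-\tfrac{c_{n+1}}{c_k}>d$, with no integrality or nonnegativity assumption on $\alpha$. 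With that repair your proof goes through; alternatively, you can sidestep the $v=x_k$ computation entirely by factoring $G=y^eG_0$ and using the symmetry of the identity together with Lemma~\ref{l4}, as the paper does.
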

	
	\begin{proof} First, we prove~(\ref{eq2}) in the special case
		$G=y$ and $H\in R_z'$. 
		
		Every $F\in R_{w+z+i}$ can be written in the form 
		$F=y \phi + \psi$ where $\phi, \psi$ are polynomials and 
		$\psi$ does not depend on $y$. Then 
		$\phi\in R_{z+i}$ and $\psi=H\psi'$ where $\psi' = \psi/H\in R_{w+i}'$.
		This gives~(\ref{eq2}) in the special case. All remaining cases follow from Lemma~\ref{l3} and Lemma~\ref{l4}.
	\end{proof}
	
	\smallskip 
	
	\begin{proof}[Proof of Theorem \ref{t43}]
		
		Suppose that $ R =  \bK[\![x_1,\dots,x_n]\!] $.
		Then, $ \widehat R = R $ and any element in $ R[\![y]\!] = \bK[\![\underline x, y ]\!] $ 
		has an expansion with coefficients contained in the residue field $ \bK $.
		
		Proceeding as in the proof of Theorem \ref{t1}, but using Lemma~\ref{l5} instead of Lemma~\ref{l3}
		we obtain $\overline{g},\overline{h}\in\bK[\![\underline x, y ]\!]$ such that  
		$f=\overline{g}\overline{h}$, $\overline{g}|_{E_1}=G$ and $\overline{h}|_{E_2}=H$ 
		for some segments $E_1,E_2$, where $E=E_1+E_2$. 
		The assumptions that the loose edge $E$ is descendant and the polynomial $G$  
		is monic with respect to  $y$ imply that the Newton polyhedron of $\overline{g}$ 
		has a~vertex $(0,\dots,0,d)$ for some positive integer $d$.
		Therefore $\overline{g}(0,\dots,0,y)=v y^d$ for some $v\in\bK[\![y]\!]$ such that $v(0)\ne 0$. 
		It means that $\overline{g}$ fulfills the assumptions of the Weierstrass Preparation Theorem, which implies that $\overline{g}=u \hat{g}$, 
		where $u$ is a power series such that $u(0)\ne 0$
		and $\hat{g}\in \bK[\![x_1,\dots,x_{n}]\!][y]$  is a Weierstrass polynomial of degree $d$. 
		Putting $g=\hat{g}$ and $h=u\overline{h}$ we get $f=gh$.  
		Since we can also obtain $h$ using the polynomial division of $f$ by $g$
		in the ring $\bK[\![x_1,\dots,x_{n}]\!][y]$,
		we conclude that $h$ is a polynomial with respect  to~$y$. 
	\end{proof}

	\smallskip

	\section{Extending to regular local rings}
	\label{sec:general}

	After discussing the case of a formal power series ring over a field $ \bK $, 
	we explain how the results can be extended to the completion of a regular local ring
	by using graded rings. 
	
	Let $ ( R, \fm, \bK = R/\fm ) $ be a regular local ring,
	let $ ( \underline{x}) = (x_1, \ldots, x_n) $ be a regular system of parameters,
	and let $ \widehat R $ be the $ \fm $-adic completion of $ R $.
	We consider $ f \in R $ such that its Newton polyhedron has a loose edge $ E $.
	
	\smallskip 
	
	First, let us point out that 
	Lemmas \ref{Lem:edge}, \ref{Lem:only_vertices}, \ref{Lem:base} are results on convex geometry.
	In fact, they are true for any $ F $-subset $ \Delta \subset \bR^n_{\geq 0}  $, which is closed convex subset such that $ \Delta + \bR^n_{\geq 0} = \Delta $, \cite[p.~260]{HiroCharPoly}.
	In particular, they are independent of $ R $. 
	Furthermore, the proof of Corollary \ref{w1} only uses Theorem~\ref{t1} and Lemma~\ref{Lem:only_vertices}
	and thus can be applied word by word.
	
	\begin{obs}
		Recall that, after Lemma \ref{Lem:base},
		we introduced a weight given by
		$\omega(\underline{x}^{\alpha}) := 
		(\scalar{\xi_1}{\alpha},\,\dots,\scalar{\xi_{n-1}}{\alpha})$,
		where $\xi_1$,\dots, $\xi_{n-1}\in \bN^n$ are linearly independent vectors
		which are orthogonal to $E$. 
		The map $ \omega $ induces a monomial valuation (denoted also by $\omega$) on $ R $
		with values in $ \bN^{n-1}$, i.e. for any nonzero $ h = \sum_{ \alpha  \in \bN^n} b_\alpha \underline{x}^\alpha \in R $, where  $ b_\alpha \in R^\times \cup \{ 0 \} $, 
		we set  
		$$ \omega(h) := \inf \{ \omega (\underline{x}^{\alpha}) : b_\alpha \neq 0 \} .$$
		
		We equip $ \bN^{n-1} $ with the partial ordering defined by 
		$ \alpha \leq \beta   \Leftrightarrow \beta = \alpha + \gamma $, for some $ \gamma \in \bN^{n-1} $. 
		The {\em graded ring of $ R $ associated to $ \omega $} is defined as
		\[
		\gr_\omega (R) := \bigoplus_{ w \in \bN^{n-1} }
		\cP_w / \cP_w^+, 
		\]
		where $ \cP_w := \{ h \in R : \omega(h) \geq w \} $ and 
		$ \cP_w^+ := \{ h \in R : \omega(h)  > w \} $.
		Note that $ \cP_{(\underline 0 ) } = R $ and $ \cP_{(\underline 0 ) }^+ = \fm $, since $ \omega(x_i) > ( \underline 0 ) $ for all $ i = 1, \ldots, n  $.
		Hence, we have $ \cP_{(\underline 0 ) } / \cP_{(\underline 0 ) }^+ = \bK $. Denote by $X_i\in \gr_\omega(R)$ the image of $x_i\in R$ in the graded ring.
		Then the {\em initial form of $ h $ with respect to $ \omega $} is defined by $ \ini_\omega(h) := X_i $ if $h=x_i$ and otherwise,
		$ \ini_\omega(h) :=  \sum_{ \alpha  : \omega(\underline{x}^\alpha) = \omega(h) } \overline b_\alpha \underline{X}^\alpha $,
		where $ \overline b_\alpha \in \bK $ is the image of $ b_\alpha $ in the residue field. Since $ \omega $ is a monomial valuation, we have $ \cP_w / \cP_w^+ = R_w $, for all $ w\in \bN^{n-1} $, i.e.,
		$ \gr_\omega (R) = \bigoplus_{ w \in \bN^{n-1} }
		R_w \cong \bK [ X_1, \ldots, X_n ] $.

		In conclusion, we see that the second set of ingredients for the proofs of Theorems \ref{t1} and \ref{t43}, Lemmas \ref{Lem:sum} and \ref{l3}, are results on the graded ring $ \gr_\omega(R) $
		without the restriction $ R = \bK[\![\underline x]\!] $.
	\end{obs}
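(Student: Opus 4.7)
The plan is to verify the two substantive claims of the observation: that $\omega$ extends to a well-defined monomial valuation on $R$, and that the associated graded ring $\gr_\omega(R)$ is canonically isomorphic to $\bK[X_1,\dots,X_n]$. Together with the fact that the arguments of Lemmas~\ref{Lem:sum} and~\ref{l3} use only the graded structure $\bigoplus_w R_w$, this yields the final assertion that those lemmas apply to $\gr_\omega(R)$ for an arbitrary regular local ring.

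First, I would establish that $\omega$ is well-defined on $R$. Using the finite expansion $h = \sum_\alpha a_\alpha \underline{x}^\alpha$ with $a_\alpha \in R^\times \cup \{0\}$ from \cite[Proposition 2.1]{CPmixed}, the naive formula $\omega(h) = \inf\{\omega(\underline{x}^\alpha) : a_\alpha \neq 0\}$ could a priori depend on the chosen expansion. To circumvent this, I would define the filtration intrinsically: let $\cP_w$ be the ideal of $R$ generated by the monomials $\underline{x}^\alpha$ with $\omega(\underline{x}^\alpha) \geq w$ componentwise, and $\cP_w^+ := \sum_{w' > w} \cP_{w'}$. Independence of the expansion then amounts to the fact that rewriting a unit coefficient $a_\alpha \in R^\times$ via its own expansion only produces further monomials $\underline{x}^{\alpha+\beta}$ of weight $\geq \omega(\underline{x}^\alpha)$, so the class of the minimum-weight part in $\cP_w/\cP_w^+$ does not change. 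Multiplicativity $\omega(fg) = \omega(f) + \omega(g)$ then follows by expanding the product and observing that the leading terms cannot cancel, since the product of two units of $R$ remains a unit.

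Second, I would construct the graded ring isomorphism. For each $w \in \bN^{n-1}$, define
\[
\cP_w / \cP_w^+ \longrightarrow R_w, \qquad \Big[\sum_{\omega(\underline{x}^\alpha) \geq w} a_\alpha \underline{x}^\alpha\Big] \longmapsto \sum_{\omega(\underline{x}^\alpha) = w} \overline{a}_\alpha \underline{X}^\alpha,
\]
where $\overline{a}_\alpha \in \bK$ is the residue of $a_\alpha$ modulo $\fm$. The previous step shows this map is well-defined; it is a $\bK$-linear isomorphism whose inverse sends $\underline{X}^\alpha$ to the class of $\underline{x}^\alpha$. Summing over $w$ and checking compatibility with multiplication (which reduces to the monomial identity $\omega(\underline{x}^\alpha \underline{x}^\beta) = \omega(\underline{x}^\alpha) + \omega(\underline{x}^\beta)$ and multiplicativity of residues of units in $R$) yields the graded ring isomorphism $\gr_\omega(R) \cong \bK[X_1, \ldots, X_n]$.

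The main obstacle is the non-uniqueness of the expansion, which must be handled carefully to verify that the assignment $h \mapsto \ini_\omega(h)$ is intrinsic. Once this is in place the conclusion is formal: every argument in the proofs of Lemmas~\ref{Lem:sum} and~\ref{l3} refers only to monomials, dimensions, and coprimeness in $\bigoplus_w R_w \cong \bK[X_1,\dots,X_n]$, and these translate literally to $\gr_\omega(R)$ via the above isomorphism. The iterative construction in the proofs of Theorems~\ref{t1} and~\ref{t43} then produces the factors in the $\fm$-adic completion $\widehat R$, since at each stage one solves a linear equation in the finite-dimensional $\bK$-vector space $\cP_{w+z+i}/\cP_{w+z+i}^+$, and the accumulated series converges in $\widehat R$ by completeness with respect to the filtration $\{\cP_w\}$.
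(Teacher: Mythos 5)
Your proposal is correct and follows essentially the same route as the paper's own (largely definitional) justification: introduce the weight filtration $\cP_w$ intrinsically, identify each graded piece $\cP_w/\cP_w^+$ with $R_w$ via residues of unit coefficients, conclude $\gr_\omega(R)\cong\bK[X_1,\dots,X_n]$, and observe that Lemmas~\ref{Lem:sum} and~\ref{l3} only use this graded ring while the iterative factorization converges $\fm$-adically in $\widehat R$. The only minor quibble is your justification of non-cancellation of leading terms (``the product of two units of $R$ remains a unit''): distinct monomial pairs can contribute to the same product monomial, so the correct reason is that $\gr_\omega(R)$ is a domain, which your isomorphism $\gr_\omega(R)\cong\bK[X_1,\dots,X_n]$ supplies anyway.
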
 	
	
	The essential part for extending the proof to any regular local ring, is the following construction 
	to find a {\em lift} $ g \in R $ for a given homogeneous element $ G \in R_w \subset \gr_\omega (R) \cong \bK[X_1, \ldots, X_n] $, where $ w \in \bN^{n-1} $: 		
	We can write $ G $ as a finite sum
	$ G = \sum_{\alpha \in \bN^n} \lambda_\alpha X^\alpha $,
	for $ \lambda_\alpha \in \bK $.
	For every $ \alpha \in \bN^n $ with $ \lambda_\alpha \neq 0 $, we choose a unit $ b_\alpha \in R^\times $ such that we have
	$ b_\alpha \equiv \lambda_\alpha \mod \fm $.
	Otherwise, we put $ b_\alpha := 0 \in R $.
	We define
	\[
	g := \sum_{\alpha \in \bN^n } b_\alpha x^\alpha \in R,
	\]
	which is an element in $ R $ since all but finitely many coefficients $ b_\alpha $ are zero.
	By construction, the image of $ g $ in $ \gr_\omega(R) $ is $ G $, 
	i.e., $ \ini_\omega(g) = G $.

	In general, $ g $ is not unique since it depends on a choice of a system of representatives in $ R $ for the residue field $ \bK = R/\fm $.
	If $ \bK \subset R $, then we can uniquely choose $ b_\alpha := \lambda_\alpha $.
	
	\smallskip
	
	Using the above, we can adapt the proofs to obtain Theorems \ref{t1} and \ref{t43}.
	
	\begin{proof}[Proof of Theorem \ref{t1}]
		Let $ w_0 := \omega(f) $. 
		We have $ f|_E = \ini_\omega (f) $. 
		By the hypothesis of the theorem, $ \ini_\omega(f) = G H \in R_{w_0} $. 
		Since all monomials of $ \ini_\omega (f) $ have the same weight,
		it follows $ G\in R_{w}$ and $H\in R_{z}$ 
		for some $w$, $z\in \bN^{n-1}$ such that $ w + z = w_0 $.
		We set $ G_w := G $ and $ H_z := H $.
		
		Let $ f = \sum_\alpha a_\alpha \underline x^\alpha $ be a finite expansion of $ f $ in $ R $.
		Fix $\alpha_0 \in \bN^{n}$ such that $a_{\alpha_0}\neq0$ and 
		$\omega(\underline{x}^{\alpha_0}) = w_0$, 
		i.e., $ \underline{X}^{\alpha_0} $ is a monomial appearing in $ \ini_\omega(f) $. 
		By Lemma~\ref{Lem:edge}, we have $\scalar{\xi}{\alpha} \geq \scalar{\xi}{\alpha_0}$
		for every $\xi\in\mathbb{R}_{\geq0}^n$ which is orthogonal to $E$. 
		This yields $\omega({\underline x}^{\alpha-\alpha_0})\in M$. 
		Since $\omega({\underline x}^{\alpha_0})=w+z$, 
		we obtain $ \omega({\underline x}^{\alpha}) = w + z + i = w_0 + i $, for some $i\in M$. 
		
		We choose a lift $ g_w \in R $ (resp.~$ h_z \in R $) of $ G_w \in R_w $ (resp.~$H_z \in R_z $), as described before.
		We define our first approximation of $ f $ as $ \phi_1 :=  g_w \cdot h_z $.
		Note that $ \ini_\omega(f) = \ini_\omega(\phi_1) $.
		We introduce $ f_1 := f - \phi_1 $.
		By the previous, we have $ w_1 := \omega (f_1) = w_0 + i_1 $, for some $ i_1 \in M $ and $ i_1 \neq ( \underline 0 ) $.
		In particular, $ \ini_\omega(f_1) \in R_{w+z+i_1} \subset \gr_\omega(R) $. 
		By assumption, $ G_w = G $ is not divisible by a monomial, hence, by Lemma \ref{l3}, 
		there are $ H_{z+i_1} \in R_{z+i_1} $ and 
		$  G_{w+i_1} \in R_{w+i_1} $ such that
		$
		\ini_\omega(f_1) = 
		G_w H_{z+i_1} + H_z G_{w+i_1}.
		$
		We choose $ h_{z+i_1}, g_{w+i_1} \in R $, as described before.
		The second approximation $ \phi_2 \in R $ of $ f $ is then defined as 
		\[
		\phi_2 := (g_w + g_{w+i_1})(h_z + h_{z+i_1} )
		= g_w h_z + (g_w h_{z+i_1} + h_z g_{w+i_1}) + g_{w+i_1} h_{z+i_1}.
		\]
		By construction, $ \omega (g_{w+i_1} h_{z+i_1}  ) > w_1 $.
		For
		$
		f_2 := f - \phi_2
		$,
		we obtain that $ w_2 := \omega(f_2) > w_1 $.
		We proceed with the construction and finally get
		$  g := \sum_{i\in M } g_{w+i} $, 
		$  h := \sum_{i\in M } h_{z+i} \in 
		\widehat R 
		$
		with the property $ f = g \cdot  h $ in $ \widehat R $.

		The claim concerning $ E = E_1+ E_2 $ follows with the same argument as in the case $ R = \bK[\![\underline x ]\!] $.
	\end{proof}

	\smallskip 
	
	Finally, the proof of Theorem \ref{t43} for $ R = \bK[\![\underline x ] \!] $ 
	can easily be adapted to the general setting:

	\begin{proof}[Proof of Theorem \ref{t43}]
		Proceeding as in the previous proof, but using Lemma~\ref{l5} instead of Lemma~\ref{l3}
		we obtain $\overline{g},\overline{h}\in \widehat R [\![ y ]\!]$ such that 
		$f=\overline{g}\overline{h}$, $\overline{g}|_{E_1}=G$ and $\overline{h}|_{E_2}=H$ 
		for some segments $E_1,E_2$, where $E=E_1+E_2$. 
		The assumptions that the loose edge $E$ is descendant and the polynomial $G$  
		is monic with respect to  $y$
		imply that the Newton polyhedron of $\overline{g}$ 
		has a~vertex $(0,\dots,0,d)$ for some positive integer $d$.
		Hence, the monomial $ v y^d$ appears in an expansion of $ \overline g $, for some unit $ v \in \widehat R[\![y]\!]^\times $.
		The Weierstrass preparation theorem 
		(\cite[Theorem~9.2]{Lang})
		implies that
		there exist a unit $ u \in \widehat{R}[\![y]\!]^\times $ and 
		a Weierstrass polynomial $ g \in \widehat R [y] $ of degree $d$  such that
		$
		\overline g = u  g.
		$
		We define $ h := u \overline h $ and obtain that
		$ f = g h $.
		As before, $ f, g \in \widehat R [y] $ imply 
		$ h \in \widehat R[y] $.
	\end{proof}

	\smallskip

	\section{Examples}
	Let us discuss some examples.
	
\begin{ex}
	Let $ R $ be a regular local ring (not necessarily complete) of dimension three with regular system of parameters $ (x,y,z) $ and residue field $ \bK $. 
	Suppose that $ R $ contains a field of characteristic different from two.
	(E.g., $ R = \bK[x,y,z]_{\langle x,y,z \rangle} $ and $ \mbox{char}(\bK) \neq 2 $).
	Consider the element
	\[
		f = x^6 y^2 - z^4 + x y z^4 - x^7 y^5 z^2 \in R.
	\]
	It is not hard to see that the Newton polyhedron $ \Delta(f) $ has exactly two vertices, $ v_1 = (6,2,0) $ and $ v_2 = (0,0,4) $.
	Hence, $ v_1 $ and $ v_2 $ determine a loose edge $ E $ of $ \Delta(f) $.
	The symbolic restriction of $ f $ to $ E $ is
	\[
		f|_E = X^6 Y^2  - Z^4 = (X^3 Y - Z^2)(X^3 Y + Z^2) \in \bK[X,Y,Z] \cong \gr_\omega(R),  
	\]
	where $ \omega = \omega(\xi) $, for $ \xi \in \bR_{\geq 0}^3 $ appropriately chosen such that $ \Delta(f)^\xi = E $.
	(Here, and in the examples below, we leave the explicit computation of $ \xi $ and $ \omega $ as an exercise to the reader).
	
	By Theorem~\ref{t1}, the factorization of $ f|_E $ implies that there are $ g,h \in \widehat R $ in the completion such that $ f = g h $. 
	Indeed, if we have a closer look at $ f $, we observe that
	$f = x^6 y^2 ( 1 - x y^3 z^2 ) - z^4 (1 - xy)$,
	which factors in the completion as
	\[
		f = 
		(x^3 y \sqrt{1 - x y^3 z^2 } - z^2 \sqrt{1 - xy})
		(x^3 y \sqrt{1 - x y^3 z^2 } + z^2 \sqrt{1 - xy}) \in \widehat R.
	\]

\end{ex}	

\smallskip 

\begin{ex}
	Let $ R $ be any regular local ring of dimension three with regular system of parameters $ (x,y,z) $ and residue field $ \bK $. 
	Let 
	\[
		f = x y z + x^3 y^3 + x^3 z^3 + y^3 z^3.
	\]
	We observe that the Newton polyhedron is of the form as the one in Figure 3 at the beginning. 
	In particular, it has three loose edges. 
	If we consider the edge $ E $ determined by the vertices $ (1,1,1) $ and $ (3,3,0) $, 
	we see that the corresponding symbolic restriction is 
	$ f|_E = XYZ + X^3Y^3  = XY(Z + X^2 Y^2) \in \bK[X,Y,Z] $.
	Therefore, $ f $ is not irreducible in the completion $ \widehat R $, by Theorem~\ref{t1}.
\end{ex}

Note that in the previous example, $ R $ is allowed to have mixed characteristics,
e.g., we might have $ R = \mathbb{Z}[y,z]_{\langle p,y,z \rangle } $, where $ x=p \in \mathbb{Z} $ is a prime number.

\smallskip 

\begin{ex}
	Let $ R $ be a regular local ring of dimension two with regular system of parameters 
	$ (x_1, x_2) $ and residue field $ \bK $.
	Consider the Weierstrass polynomial
	\[
		f = y^8 + (x_1^3 - x_2^2) y^3 + x_1^5 x_2^4 y^2 - x_1^{15} x_2^{18} \in R[y].
	\]
	In Figure 6 the corresponding Newton polyhedron is drawn.
	\begin{center}
		\begin{tikzpicture}[scale=0.25]
		\draw [->](0,0,0) -- (21,0,0); 
		\draw[->](0,0,0) -- (0,10,0) ; 
		\draw[->](0,0,0) -- (0,0,18);
		
		\filldraw[black] (0,8,0)  coordinate (v0) circle (5pt);
		\filldraw[black] (2,4,0)  coordinate (v1) circle (5pt);
		\filldraw[black] (0,4,3)  coordinate (v2) circle (5pt);
		\filldraw[black] (5,2,4)  coordinate (v3) circle (5pt);
		\filldraw[black] (18,0,15)  coordinate (v4) circle (5pt);
		
		\draw[very thick] (v0) -- (v1) -- (v2) -- (v0);
		\draw[very thick] (v1) -- (v3) -- (v2);	
		\draw[very thick,blue] (v3) -- (v4);	
		
		\draw[thick, dashed]  (0,8,18) -- (0,8,0) -- (21,8,0);
		\draw[thick, dashed]  (0,9.9,0) -- (0,8,0);
		\draw[thick, dashed]  (v2) -- (0,4,18);
		\draw[thick, dashed]  (v1) -- (21,4,0);
		\draw[thick, dashed]  (5,2,18) -- (v3) -- (21,2,4);				
		\draw[thick, dashed]  (18,0,18) -- (v4) -- (21,0,15);
		
		\draw[dotted]  (0,8,18) -- (0,4,18) -- (5,2,18) -- (18,0,18);
		\draw[dotted]  (21,8,0) -- (21,4,0) -- (21,2,4) -- (21,0,15);

		\filldraw[black] (0,8,0) circle (5pt);
		\filldraw[black] (2,4,0) circle (5pt);
		\filldraw[black] (0,4,3) circle (5pt);
		\filldraw[black] (5,2,4) circle (5pt);
		\filldraw[black] (18,0,15) circle (5pt);
		
		\node [below=1.2cm, align=flush center,text width=2cm] at (5.5,-3.5,0)    {Fig.\ 6};
		\end{tikzpicture}
	\end{center}
	The edge $ E $ determined by the vertices $ (5,4,2) $ and $ (15,18,0) $ is loose and descendant.
	Since the symbolic restriction of $ f $ to $ E $ is 
	\[ 
		f|_E = X_1^5 X_2^4 Y^2 - X_1^{15} X_2^{18} = X_1^5 X_2^4 ( Y - X_1^{5} X_2^{7})( Y + X_1^{5} X_2^{7}) \in \bK[X_1, X_2, Y],
	\]  
	Theorem~\ref{t43} implies that $ f $ is not irreducible in $ \widehat R [y] $.
\end{ex}

\begin{ex}
	Let $ R = \mathbb{Z}_p $ be the $ p $-adic integers, for some $ p \in \mathbb{Z} $ prime.
	Let 
	\[
		f = y^3 + 270 y + 540 \in R[y].
	\]
	Note that $ 270 = 2 \cdot 3^3 \cdot 5 $ and $ 540 = 2^2 \cdot 3^3 \cdot 5  $
	and the residue field of $ R $ is $ \mathbb{F}_p $.
	\begin{enumerate}
		\item 
		If $ p = 2 $, then $ f = y^3 + \epsilon_1 p y + \epsilon_3 p^2 $, for $ \epsilon_1, \epsilon_3 \in \mathbb{Z}_2^\times $ units.
		The Newton polyhedron has three vertices $ (0,3), (1,1), (2,0) $.
		If $ E_2 $ denotes the edge given by $ (0,3) $ and $ (1,1) $, then
		$  
			f|_{E_2} = Y^3 + P Y = Y (Y^2 + P) \in \mathbb{F}_2[P,Y] .
		$ 
		Hence, Theorem~\ref{t43} implies that $ f $ factors in $ \widehat{\mathbb{Z}_2}[y] $.
		
		\smallskip 
		
		\item 
		If $ p = 3 $, then $ f = y^3 + \mu_1 p^3 y + \mu_3 p^3 $, for $ \mu_1, \mu_3 \in \mathbb{Z}_3^\times $ units.
		The Newton polyhedron has two vertices $ (0,3) $ and $ (3,0) $.
		Let $ E_3 $ be the corresponding compact edge. 
		We have $ f|_{E_3} = Y^3 + P^3 = (Y + P)^3 \in \mathbb{F}_3[P,Y] $. 
		Therefore, we cannot apply Theorem~\ref{t43}.
	\end{enumerate}
	Note that if $ p = 5 $, then $ f = y^3 + \rho_1 p y + \rho_3 p $, for $ \rho_1, \rho_3 \in \mathbb{Z}_5^\times $ units, which means that $ f \in \mathbb{Z}_5[y] $ is irreducible in $ \widehat{\mathbb{Z}_5}[y] $.
\end{ex}

\smallskip 	
	
	\section{Relation with known results}
	Corollary~\ref{w2}  generalizes to $n>2$ variables the following well-known fact.
	
	\begin{Tw}[{\cite[Lemma 3.15]{hefez}}] 
		Assume that a power series $f\in \bC[\![x,y]\!]$ written as a sum of homogeneous polynomials 
		$f=f_d+f_{d+1}+\dots$, where the subindex means the degree,  is irreducible. 
		Then $f_d$ is a power of a linear form.
	\end{Tw}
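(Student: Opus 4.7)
The plan is to specialize Corollaries~\ref{w1} and~\ref{w2} to the case $ n = 2 $ with $ R = \bC[\![x,y]\!] $, using the fact recorded in Figure~4 that every compact edge of a plane Newton polyhedron is loose. The key identification is that the homogeneous initial form $ f_d $ equals the symbolic restriction $ f|_{\Delta(f)^{(1,1)}} $ whenever $ \Delta(f)^{(1,1)} $ is one-dimensional; if instead $ \Delta(f)^{(1,1)} $ is a single vertex, then $ f_d $ is already a monomial.

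First I would apply Corollary~\ref{w1} to conclude that $ \Delta(f) $ has at most two vertices, since three or more vertices together with any loose edge would force reducibility. The one-vertex case is immediate: $ f $ is then a unit multiple of $ x^i y^j $, and irreducibility forces $ f $ to be associate to $ x $ or to $ y $, so that $ f_d $ is itself linear.

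In the two-vertex case, I would next verify that each of the two vertices must lie on a distinct coordinate axis. Indeed, if both vertices had, say, a positive first coordinate, then $ x $ would divide every monomial of $ f $ and therefore divide $ f $, contradicting irreducibility. Hence the vertices can be written as $ v_1 = (0, N_1) $ and $ v_2 = (N_2, 0) $, joined by a unique compact edge $ E $. Corollary~\ref{w2} applied over the algebraically closed field $ \bC $ then gives $ f|_E = (a\underline{X}^\alpha + b\underline{X}^\beta)^k $ with $ \alpha - \beta $ primitive, and matching the endpoints $ k\alpha = v_1 $ and $ k\beta = v_2 $ forces $ \alpha = (0,q) $ and $ \beta = (p,0) $ with $ \gcd(p,q) = 1 $. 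If $ p = q $, primitivity yields $ p = q = 1 $, so $ N_1 = N_2 = k $, the face $ \Delta(f)^{(1,1)} $ coincides with $ E $, and $ f_d = f|_E = (aY + bX)^k $ is a power of a linear form. If $ p \neq q $, then $ N_1 \neq N_2 $, the face $ \Delta(f)^{(1,1)} $ degenerates to the axis-vertex of smaller total degree, and $ f_d $ reduces to a pure power of $ x $ or of $ y $.

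The main obstacle I anticipate is this last bookkeeping step: one must be attentive to the case in which the two axis-vertices have distinct total degrees, for then the classical homogeneous initial form $ f_d $ no longer equals $ f|_E $ but instead becomes a monomial supported on an axis. All remaining steps are routine applications of Corollaries~\ref{w1} and~\ref{w2} once one invokes the fact that all compact edges in the plane are loose.
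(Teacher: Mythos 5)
Your proposal is correct. Note that the paper does not actually prove this statement: it is quoted from Hefez as a known fact, with only the remark that Corollary~\ref{w2} generalizes it to $n>2$ variables. What you supply is precisely the deduction that remark leaves implicit, and you carry it out soundly: in the plane every compact edge is loose, so Corollary~\ref{w1} caps the number of vertices at two; a single vertex forces $f$ to be an associate of a variable; with two vertices, irreducibility forces them onto distinct axes (otherwise a variable divides $f$), and Corollary~\ref{w2} over the algebraically closed field $\bC$ gives $f|_E=(a\underline X^\alpha+b\underline X^\beta)^k$ with $\alpha-\beta$ primitive. You also correctly isolate the one point where care is needed, namely that $f_d=f|_E$ only when the two axis vertices $(0,N_1)$ and $(N_2,0)$ have equal total degree (in which case primitivity forces $\alpha=(0,1)$, $\beta=(1,0)$ and $f_d=(aY+bX)^k$), whereas for $N_1\neq N_2$ the face $\Delta(f)^{(1,1)}$ collapses to a vertex and $f_d$ is a pure power of one variable, hence still a power of a linear form. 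The only cosmetic caveat is the degenerate convention that $f$ should not be a unit (so $d\geq 1$), which the original lemma tacitly assumes.
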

	
	\medskip
	Let $\bK$ be a field and fix a weight $\omega(x^iy^j) := ni + mj$ for $n,m\in\bZ_{>0}$. 
	Given $0 \neq F \in\bK[\![x, y]\!]$, we will consider its decomposition in 
	$\omega$-quasihomogeneous forms
	$$ F(x, y) = F_{a+b}(x, y) + F_{a+b+1}(x, y) + \dots\,, $$
	where the subindex means the $\omega$-weight.
	
	\begin{Tw}[{\cite[Lemma A1]{artal2015high}}]
		Assume that $F_{a+b}(x, y) = f_a(x, y)g_b(x, y)$, where
		$f_a$, $g_b \in\bK[x, y]$ are quasihomogeneous and coprime. 
		Then, there exist
		$$f, g \in\bK[\![x,y]\!],\quad f = f_a + f_{a+1} + \dots\,,\quad  g = g_b + g_{b+1} + \dots
		$$
		such that $F = fg$. Moreover if $f_a$ is an irreducible polynomial, 
		then $f$ is an irreducible power series.
	\end{Tw}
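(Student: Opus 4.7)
The plan is to follow the iterative construction used in the proof of Theorem~\ref{t1}. Interpret the weight $\omega(x^iy^j) = ni + mj$ as the vector $\xi = (n,m) \in \bR^2_{>0}$; then $F_{a+b}$ coincides with the symbolic restriction $F|_A$, where $A = \Delta(F)^\xi$ is a compact face of the plane Newton polyhedron $\Delta(F) \subset \bR^2_{\geq 0}$. Either $A$ is a vertex, in which case $F_{a+b}$ is a single monomial and the desired factorization is immediate, or $A$ is a compact edge $E$; in the two-variable setting every compact edge of a Newton polyhedron is loose (see Figure~4).

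Starting from $f_a$ and $g_b$, I would build $\omega$-homogeneous pieces $f_{a+c} \in R_{a+c}$ and $g_{b+c} \in R_{b+c}$ of $\bK[x,y]$ by recursion on $c \geq 0$, solving at each step
\[
f_a\, g_{b+c} + g_b\, f_{a+c} = F_{a+b+c} - \sum_{\substack{u+v=c\\ 0<u<c}} f_{a+u}\, g_{b+v},
\]
and then set $f := \sum_{c} f_{a+c}$ and $g := \sum_{c} g_{b+c}$ so that $F = fg$. This construction succeeds provided the multiplication map
\[
\mu_c \colon R_{b+c} \oplus R_{a+c} \longrightarrow R_{a+b+c}, \qquad (p,q) \longmapsto f_a\, p + g_b\, q,
\]
is surjective for every $c \geq 0$.

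The step where working in two variables makes life easier is precisely the verification of surjectivity. The proof of Lemma~\ref{l3} needs the hypothesis that one of $f_a, g_b$ is not divisible by any variable, but in $\bK[x,y]$ this hypothesis can be dropped: two coprime $\omega$-homogeneous elements automatically form a regular sequence in the two-dimensional Cohen--Macaulay domain $\bK[x,y]$, so the Koszul complex
\[
0 \to R_{d-a-b} \to R_{d-a} \oplus R_{d-b} \to R_d \to \bigl(\bK[x,y]/(f_a,g_b)\bigr)_d \to 0
\]
is exact in every $\omega$-degree $d$. The quotient is Artinian with Hilbert series $(1-t^a)(1-t^b)/((1-t^n)(1-t^m))$ and socle degree $a+b-n-m$, so its graded piece in degree $d = a+b+c$ with $c \geq 0$ vanishes. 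This gives the surjectivity of $\mu_c$ needed to run the induction.

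For the moreover part, suppose $f_a$ is irreducible in $\bK[x,y]$ and that $f = pq$ in $\bK[\![x,y]\!]$. Taking $\omega$-initial forms in the integral domain $\bK[x,y]$ yields $f_a = p_\alpha\, q_\beta$, so irreducibility of $f_a$ forces one of $p_\alpha, q_\beta$ to be a nonzero constant; the corresponding factor of $f$ is then a unit in $\bK[\![x,y]\!]$, proving irreducibility of $f$. The main obstacle in executing the plan is exactly the surjectivity of $\mu_c$ without assuming a monomial-free factor; the remark following Theorem~\ref{t1} shows this is a genuine issue in higher dimensions, but the Cohen--Macaulay structure of the two-variable weighted polynomial ring resolves it.
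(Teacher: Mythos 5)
Your proof is correct, but it cannot be compared line-by-line with the paper, because the paper does not prove this statement at all: it is quoted from \cite[Lemma A1]{artal2015high} in the section on related results, and the authors only remark that Theorem~\ref{t1} was \emph{inspired by} its method of proof. What you have written is essentially the two-variable specialization of the paper's proof of Theorem~\ref{t1} --- the same recursive construction of quasihomogeneous pieces $f_{a+c}$, $g_{b+c}$ solving $f_a g_{b+c}+g_b f_{a+c}=F_{a+b+c}-\sum f_{a+u}g_{b+v}$ --- with one genuine and necessary difference in the key surjectivity step. The paper's Lemma~\ref{l3} assumes that one factor is divisible by no variable, and its proof rests on the lattice-point count of Lemma~\ref{Lem:sum}, which needs the relevant graded piece to contain two coprime monomials; the Remark after Theorem~\ref{t1} shows that this hypothesis cannot be dropped once $n\geq 3$. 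Since the present statement assumes only coprimality of $f_a$ and $g_b$, it is not a formal corollary of Theorem~\ref{t1}, and your replacement of the combinatorial count by the Koszul resolution of the complete intersection $(f_a,g_b)$ is exactly the right fix: in the two-dimensional domain $\bK[x,y]$ coprimality already forces $f_a,g_b$ to be a regular sequence, the Hilbert series of the Artinian quotient is $(1-t^a)(1-t^b)/((1-t^n)(1-t^m))$, and its vanishing in degrees $d>a+b-n-m$ gives surjectivity of $\mu_c$ for every $c\geq 0$. Your argument for the ``moreover'' part via multiplicativity of initial forms in the associated graded domain is also correct. The only cosmetic remark is that the opening case distinction on whether $\Delta(F)^\xi$ is a vertex or a (necessarily loose) edge is dispensable: the Koszul argument treats a monomial $F_{a+b}$ uniformly, so the Newton-polyhedron framing plays no role in your actual proof.
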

	Theorem~\ref{t1} was inspired by the method of the proof of Theorem 5.2.
	
	\medskip
	Corollary~\ref{w2} generalizes the following result of \cite{barroso2005decomposition}.
	
	\begin{Tw}
		If $\phi\in \bC\{x_1,\dots ,x_n\}$ is irreducible and has a polygonal Newton polyhedron $\Delta(\phi)$,
		then the polyhedron $\Delta(\phi)$ has only one compact edge $E$ and the polynomial $\phi|_E$ is a power of an irreducible polynomial. 
	\end{Tw}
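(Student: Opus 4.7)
The plan is to deduce this result as a direct consequence of Corollary~\ref{w2}; the only nontrivial work is to transfer the irreducibility hypothesis from the convergent ring $R:=\bC\{x_1,\dots,x_n\}$ to its $\fm$-adic completion $\widehat R=\bC[\![x_1,\dots,x_n]\!]$, after which Corollary~\ref{w2} delivers both conclusions at once.

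First I would show that if $\phi$ is irreducible in $R$, then $\phi$ remains irreducible in $\widehat R$. The argument would proceed by contradiction: if $\phi=gh$ in $\widehat R$ with $g,h$ non-units, then, because $R$ is a Henselian excellent local ring, Artin's approximation theorem produces $g',h'\in R$ with $\phi=g'h'$ and $g'\equiv g$, $h'\equiv h$ modulo $\fm^c$ for any prescribed $c$. Choosing $c$ larger than the $\fm$-adic orders of $g$ and $h$ forces $g',h'$ to be non-units (they inherit the initial forms of $g,h$), yielding a non-trivial factorization of $\phi$ in $R$ and contradicting the hypothesis.

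Next, since $\Delta(\phi)$ is polygonal, every compact edge of $\Delta(\phi)$ is by definition loose. If $\Delta(\phi)$ had no compact edges, then $\phi$ would be (up to a unit) a monomial, and irreducibility would force $\phi$ to be a single variable, in which case the statement is vacuous. Otherwise, I would pick any compact (hence loose) edge $E$ of $\Delta(\phi)$ and apply Corollary~\ref{w2} to $\phi\in \widehat R$, obtaining both that $E$ is the unique compact edge of $\Delta(\phi)$ and that $\phi|_E=cF^k$ for some $c\in \bC^\times$ and some irreducible $F\in \bC[X_1,\dots,X_n]$. Since $\bC$ is algebraically closed, writing $c=d^k$ and absorbing $d$ into $F$ exhibits $\phi|_E$ as the $k$-th power of an irreducible polynomial.

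The main obstacle is the first step: transferring irreducibility from the convergent to the formal setting. This is not automatic and relies on the non-trivial Artin approximation theorem (equivalently, on the excellence and Henselianness of $\bC\{x_1,\dots,x_n\}$). Once this reduction is in hand, the remainder of the proof is a direct invocation of Corollary~\ref{w2} and a cosmetic use of algebraic closedness of $\bC$ to rewrite $cF^k$ as a genuine $k$-th power.
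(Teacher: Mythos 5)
Your derivation is correct, but note that the paper does not actually prove this statement: it is quoted verbatim from Garc\'{\i}a Barroso--Gonz\'alez P\'erez \cite{barroso2005decomposition} as a known result that Corollary~\ref{w2} generalizes, so there is no in-paper proof to compare against. What you have done is make that ``generalizes'' claim precise by re-deriving the quoted theorem from Corollary~\ref{w2}, and the one genuinely non-formal ingredient you need --- that irreducibility in $\bC\{x_1,\dots,x_n\}$ passes to $\bC[\![x_1,\dots,x_n]\!]$ --- is handled correctly via Artin approximation for the excellent Henselian ring $\bC\{x_1,\dots,x_n\}$ (in fact congruence modulo $\fm$ already forces $g',h'$ to be non-units, so you do not need $c$ large). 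The application of Corollary~\ref{w2} is then legitimate: a polygonal polyhedron with at least one compact edge has a loose edge, and extracting a $k$-th root of $c$ over $\bC$ turns $cF^k$ into a genuine $k$-th power of an irreducible polynomial. Two small points: the degenerate case with no compact edge is not ``vacuous'' --- there the conclusion ``has only one compact edge'' is literally false, and one should say the quoted statement implicitly assumes $\Delta(\phi)$ has a compact edge (equivalently at least two vertices; otherwise $\phi$ is a variable times a unit); and be aware that the paper itself gives no proof of Corollary~\ref{w2} either, so your argument ultimately rests on that corollary being a consequence of Theorem~\ref{t1} and Lemma~\ref{Lem:only_vertices}, which is true but unproved in the text.
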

	
	We recall that a Newton polyhedron is called polygonal if all its compact edges are loose. 
	
	\medskip
	Theorem \ref{t43} generalizes the main result of \cite{rond2017irreducibility} quoted below.
	For this, let us recall that a monic Weierstrass polynomial $ P(Z) \in k[\![x_1,\dots,x_n]\!][Z] $ of degree $ d $ 
		has an {\em orthant associated polyhedron} 
		if the projection of its Newton polyhedron from the distinguished point 
		$ (0, \ldots, 0, d) \in \bR^{n+1}_{\geq 0 } $ 
		to the subspace corresponding to $ (x_1, \ldots, x_n) $ is a polyhedron with exactly one vertex (\cite[Definiton~2.2]{rond2017irreducibility}).
	
	\begin{Tw}
		Let $P(Z)\in k[\![x_1,\dots,x_n]\!][Z]$ be a monic Weierstrass polynomial. Assume
		that P(Z) has an orthant associated polyhedron and that $P|_{\Gamma}\in k[x_1,\dots,x_n,Z]$ 
		is the product of two coprime monic polynomials 
		$S_1$, $S_2 \in k[x_1,\dots,x_n,Z]$, respectively, of degree $d_1$ and $d_2$. 
		Then there exist two monic polynomials $P_1$, $P_2 \in k[\![x_1,\dots,x_n]\!][Z]$, respectively,
		of degrees $d_1$ and $d_2$ in $Z$ such that \\
		i) $P = P_1P_2$, \\
		ii) there is at least one $i\in \{1, 2\}$ such that $P_i$ has an orthant associated polyhedron
		and if $\Gamma_i$ denotes the compact face of $\Delta(P_i)$ containing the points
		$(0, \dots,0,d_i)$, then $P_i|_{\Gamma_i} = S_i$ and $\Gamma_i$ is parallel to $\Gamma$.
	\end{Tw}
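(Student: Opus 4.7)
The plan is to derive Rond's theorem as a consequence of Theorem~\ref{t43} by translating its hypotheses into the language of loose descendant edges.

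First, I would argue that the distinguished face $\Gamma$ of $\Delta(P)$ is a loose descendant edge of $\Delta(P)$. The orthant associated condition forces the projection of $\Delta(P)\setminus\{(0,\ldots,0,d)\}$ onto $\bR^n_{\geq 0}$ to have a single vertex $v=(v_1,\ldots,v_n)$. Combined with the condition in part~(ii) that $\Gamma_i$ is parallel to $\Gamma$, so that $\Gamma$ should be one-dimensional like $\Gamma_i$, this identifies $\Gamma$ as the edge from $(0,\ldots,0,d)$ to $(v_1,\ldots,v_n,k)$ for some $k<d$. This edge is descendant because its direction vector has nonnegative $x$-components and a negative $Z$-component, and it is loose because any compact $2$-face containing $\Gamma$ would produce an additional projected vertex, violating the orthant associated hypothesis.

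Next, I would apply Theorem~\ref{t43} with $R=k[\![x_1,\ldots,x_n]\!]$, $f=P$, $E=\Gamma$, $G=S_1$, $H=S_2$. This yields $P=P_1 P_2$ in $k[\![x_1,\ldots,x_n]\!][Z]$ with $P_1$ monic in $Z$, satisfying $P_1|_{\Gamma_1}=S_1$, $P_2|_{\Gamma_2}=S_2$, and $\Gamma=\Gamma_1+\Gamma_2$, where $\Gamma_1,\Gamma_2$ are parallel to $\Gamma$. Applying the theorem with the roles of $S_1,S_2$ swapped, or using polynomial division $P_2=P/P_1$ as in the final step of the proof of Theorem~\ref{t43}, shows that $P_2$ is also monic in $Z$. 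The Minkowski decomposition $(0,\ldots,0,d)=(0,\ldots,0,d_1)+(0,\ldots,0,d_2)$ of the top endpoint of $\Gamma$ fixes the endpoints of $\Gamma_i$ and forces the $Z$-degree of $P_i$ to equal $d_i$; this establishes~(i).

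For the orthant associated claim in~(ii), I would argue by contradiction: suppose that for both $i=1,2$ the projection of $\Delta(P_i)$ from $(0,\ldots,0,d_i)$ has more than one vertex. Then each $\Delta(P_i)$ has a vertex $w^{(i)}\neq(0,\ldots,0,d_i)$ whose projection lies outside the orthant above the projected endpoint of $\Gamma_i$. Taking the Minkowski sum $w^{(1)}+w^{(2)}$ and using that extremal points of the support of $P=P_1 P_2$ arise from sums of extremal points of the supports of $P_1$ and $P_2$, we obtain an exponent of $P$ whose projection produces a second vertex in the projection of $\Delta(P)$, contradicting the orthant associated hypothesis on $P$. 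Hence at least one $\Delta(P_i)$ is orthant associated, with distinguished face $\Gamma_i$ parallel to $\Gamma$. The main obstacle I anticipate is the first step: rigorously showing that the orthant associated condition combined with the factorization data $P|_\Gamma=S_1 S_2$ forces $\Gamma$ to be loose, rather than contained in a higher-dimensional compact face, which may require exploiting the coprimality of $S_1,S_2$ to rule out additional compact 2-faces incident to $(0,\ldots,0,d)$.
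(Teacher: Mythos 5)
First, a framing point: the paper does not prove this statement. It is quoted verbatim as the main theorem of \cite{rond2017irreducibility}, which Theorem~\ref{t43} is said to generalize; the only ingredient the paper supplies is the Remark at the end of Section~5, namely the equivalence ``$P$ has an orthant associated polyhedron $\Leftrightarrow$ $\Delta(P)$ has a loose edge with endpoint $(0,\dots,0,d)$''. Your overall strategy --- translate the hypotheses through this equivalence and invoke Theorem~\ref{t43} --- is exactly the intended derivation, and your first steps are sound. In particular, the obstacle you anticipate in step~1 is not real: looseness of $\Gamma$ is a purely combinatorial property of $\Delta(P)$ and has nothing to do with the coprimality of $S_1,S_2$; a compact face of dimension $\geq 2$ with vertex $(0,\dots,0,d)$ would force a second vertex of the projected polyhedron (the paper's Remark, or \cite[Lemma~3.1]{parusinski2012abhyankar}), so the orthant hypothesis alone gives a loose descendant edge. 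Your treatment of the degrees is also fine: monicity of $S_i$ puts $(0,\dots,0,d_i)$ on $E_i$, the Weierstrass preparation step inside the proof of Theorem~\ref{t43} yields $\deg_Z P_1=d_1$, and division by a monic polynomial gives $P_2$ monic of degree $d_2$.

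The genuine gap is in your argument for~(ii). The projection $(\alpha,j)\mapsto \alpha/(d-j)$ from $(0,\dots,0,d)$ is not linear, so it does not interact with Minkowski sums: knowing that each $\Delta(P_i)$ has a vertex $w^{(i)}$ whose projection lies outside the orthant over the projected endpoint of $\Gamma_i$ says nothing about where $w^{(1)}+w^{(2)}$ projects; moreover $w^{(1)}+w^{(2)}$ is a vertex (or even a point of the support) of $\Delta(P)=\Delta(P_1)+\Delta(P_2)$ only when $w^{(1)},w^{(2)}$ minimize a common linear form, which you cannot arrange for arbitrarily chosen ``bad'' vertices. The step as written would fail. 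The correct route stays on the loose-edge side of the equivalence: in the construction of Theorems~\ref{t1} and~\ref{t43}, every exponent $\beta$ of $g$ has weight in $w+M$, hence satisfies $\scalar{\xi}{\beta}\geq\scalar{\xi}{a}$ for all $\xi\in\bR^{n+1}_{\geq 0}$ orthogonal to $E$ (with $a\in E_1$), and a short strictness argument for $\xi$ strictly positive shows that $E_1=\Gamma_1$ is a \emph{loose} edge of $\Delta(P_1)$ --- this is precisely the assertion the paper makes, without proof, right after Theorem~\ref{t1}. If $S_1$ is not a monomial, $\Gamma_1$ is then a genuine loose edge with endpoint $(0,\dots,0,d_1)$ and the Remark, read backwards, gives that $P_1$ is orthant associated; if $S_1$ is a monomial it must be $Z^{d_1}$, coprimality forces $S_2$ to contain a $Z$-free monomial, and the same argument applies to $P_2$. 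That yields ``at least one $i$'' exactly as stated.
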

	
	\noindent 
	\textbf{Remark.} Let $f\in k[\![x_1,\dots,x_{n}]\!][y]$ be a Weierstrass polynomial of degree $d$. 
	Then $f$ has an orthant associated polyhedron 
	if and only if $\Delta(f)$ has a loose edge with endpoint $(0,\dots,0,d)$.
	The reason for this is the following:
		$ \Delta(f) $ not having a loose edge with endpoint $ (0,\ldots, 0, d ) $ is equivalent to the existence of a compact face of dimension $ \geq 2 $, where one of the vertices is $ (0, \ldots, 0 , d ) $. 
		The latter is equivalent to the condition that the projection of the Newton polyhedron from $ (0, \ldots, 0, d) $ to the subspace determined by $ (x_1, \ldots, x_n) $ has at least two vertices which correspond to vertices of the considered compact face. 
	We also refer to \cite[Lemma~3.1]{parusinski2012abhyankar}.
	
	
	\section*{Appendix: Alternative proof for Lemma~\ref{Lem:sum}}
	
	\setcounter{section}{2} 
	\setcounter{Tw}{3}

	Recall the notations of section~2. 
	We outline an alternative proof for 
	
	\begin{lem}
		Let $w\in\bN^{n-1}$ and $z\in M$. 
		Assume that~$R_w$ contains two coprime monomials. 
		Then $$\dim R_{w+z}=\dim R_w+ \dim R_z-1.$$\label{l1}
	\end{lem}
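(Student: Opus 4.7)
The plan is to unify the two cases of the original proof by working first with the real segments
$l_v^\bR := \{\alpha \in \bR^n_{\geq 0} : \omega(\alpha) = v\}$ and only at the end taking floors and ceilings to count lattice points. Each such segment lies on a line parallel to the primitive vector $c$, and $\dim R_v$ equals the number of integer points in $l_v^\bR$. I would first fix the partition $A \sqcup B \sqcup C = \{1,\dots,n\}$ from the proof of Lemma~\ref{l1} with $c_i > 0$ on $A$, $c_i < 0$ on $B$, $c_i = 0$ on $C$; the coprime monomials $\underline{x}^a, \underline{x}^b \in R_w$ let me parametrize $l_w^\bR$ as $t \in [0, r]$, where $r = \dim R_w - 1$. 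Choosing any lattice representative $d$ with $\omega(\underline{x}^d) = z$ and parametrizing the parallel line through $d$ by $s \mapsto d+sc$, the constraints $d_i + sc_i \geq 0$ yield the real interval $l_z^\bR = [s_0^\bR, s_1^\bR]$ with $s_0^\bR := \max_{i \in A}(-d_i/c_i)$ and $s_1^\bR := \min_{j \in B}(-d_j/c_j)$, subject to the $s$-independent conditions $d_k \geq 0$ for $k \in C$; analogously $l_{w+z}^\bR$ corresponds to $T \in [s_0^\bR,\, r + s_1^\bR]$.

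The crucial use of the hypothesis $z \in M$ is to pin down this real-geometry picture: testing the definition of $M$ against the orthogonal vectors $v_{ij} := (1/c_i)e_i - (1/c_j)e_j \in \bR^n_{\geq 0}$ for $i \in A$, $j \in B$ gives $d_i/c_i \geq d_j/c_j$ and hence $s_0^\bR \leq s_1^\bR$, while $\xi = e_k$ for $k \in C$ gives $d_k \geq 0$. This shows that $l_z^\bR$ and $l_{w+z}^\bR$ are nonempty real intervals and that $l_{w+z}^\bR$ is precisely the Minkowski sum $l_w^\bR + l_z^\bR$ (both being real intervals of lengths $r$ and $s_1^\bR - s_0^\bR$ on lines parallel to $c$).

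The final step is counting lattice points uniformly. Setting $s_0 := \lceil s_0^\bR \rceil$ and $s_1 := \lfloor s_1^\bR \rfloor$, one gets $\dim R_z = \max(0, s_1 - s_0 + 1)$ and, since $r$ is an integer, $\dim R_{w+z} = r + s_1 - s_0 + 1$. The inequality $s_0^\bR \leq s_1^\bR$ forces $s_0 - s_1 \leq 1$: if $s_0 = s_1 + 1$ then $\dim R_z = 0$ and $\dim R_{w+z} = r$, while if $s_0 \leq s_1$ then $\dim R_{w+z} = r + \dim R_z$; in either case one obtains $\dim R_{w+z} = \dim R_w + \dim R_z - 1$. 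I expect the main subtlety to be in the middle paragraph: the hypothesis $z \in M$ must translate into exactly the real-interval inequalities that identify $l_{w+z}^\bR$ with $l_w^\bR + l_z^\bR$, and this Minkowski identity is precisely what collapses the case split $\dim R_z = 0$ versus $\dim R_z > 0$ of the original proof into a single formula.
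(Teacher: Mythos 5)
Your argument is correct and is essentially the paper's own alternative proof in the appendix: there, too, the segments $l_w$, $l_z$, $l_{w+z}$ are described as intervals $[0,m]$, $[t_1,t_2]$, $[t_1,t_2+m]$ on lines parallel to $c$, the hypothesis $z\in M$ is used exactly as you use it (via nonnegative test vectors supported on $A\cup B$, resp.\ on $C$, orthogonal to $E$) to show that $l_z$ is a nonempty interval, and the lemma follows by counting integer points in the three intervals. The only respect in which you depart from the main proof in Section~2 is that your floor/ceiling bookkeeping absorbs the case split $\dim R_z>0$ versus $\dim R_z=0$ into a single computation, which is precisely the simplification the appendix proof is designed to provide.
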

	
	\begin{proof}
		For any $v\in\bN^{n-1}$ 
		the dimension of the vector space $R_v$ is equal to the number of 
		monomials ${\underline x}^{\alpha}$ such that $\omega({\underline x}^{\alpha})=v$, 				hence is equal to the number of lattice points, i.e. all points with integer coordinates, in the 			set 
		$$l_v = \{\,\alpha\in \bR_{\geq 0}^n : 
		(\scalar{\xi_1}{\alpha},\dots, \scalar{\xi_{n-1}}{\alpha})=v\,\} . $$
		Notice that $l_v$ is the intersection of the straight line 
		$\{a+tc: t\in \mathbb{R}\}$ with $\bR_{\geq 0}^n$, where $\omega({\underline x}^{a})=v			$ and 
		$c$ is a primitive lattice vector parallel to the edge $E$.

		By the assumption, $R_w$ contains two coprime monomials 
		${\underline x}^{a}$ and ${\underline x}^{b}$.  
		Hence $\min(a_1,b_1)=\dots=\min(a_n,b_n)=0$.
		Thus there exist two nonempty and disjoint subsets $A$, $B$ of $\{1,\dots,n\}$ 
		such that $a_i>0 \Leftrightarrow i\in A$ and $b_i>0 \Leftrightarrow i\in B$.
		Without loss of generality we may assume that the vector $c$ is pointed in the direction    		of $b-a$. Then 
		\begin{equation} \label{E1}
		l_w=\{\, a+tc: t\in [0, m]\,\} 
		\end{equation}
		for some positive integer $m$. 
		
		\medskip
		
		Now we will show that the set $l_z$ is nonempty and give its description.
		
		Let $d\in \bZ^n$ be such that $\omega({\underline x}^d)=z$. 
		Consider the system of inequalities 
		$$ d_j+tc_j\geq 0 \quad \mbox{for $j\in B$}.
		$$ 
		Its solution space is an interval $[t_1,\infty)$. 
		
		Let $a'=d+t_1 c$. Then $a'_j\geq 0$ for all $j\in B$ and $a'_{j_0}=0$ for some $j_0\in B$. 
		
		Consider the vectors 
		$v_j=e_j-\frac{c_j}{c_{j_0}}e_{j_0}$ for $j\in \{1,\dots, n\}\setminus B$, 
		where $e_1$, \dots $e_n$ is the standard basis of $\bR^n$.
		These vectors belong to $\bR_{\geq 0}^n$ and are orthogonal to $E$. 
		By the assumption that $z\in M$ we get 
		$a'_j  = \scalar{v_j}{a'} \geq 0$. It follows that $a'\in \bR_{\geq 0}^n$. 
		
		Using the same argument, but starting from the system of inequalities 
		$$ d_i+tc_i\geq 0 \quad \mbox{for $i\in A$}
		$$ 
		
		we show that there exist $t_2\in\bR$ such that  the point
		$b'=d+t_2c$ belongs to $\bR_{\geq 0}^n$ and $b'_{k}=0$ for some $k\in A$.
		
		It follows from the above that $a',b'\in l_z$ and 
		\begin{equation} \label{E2}
		l_z=\{\, d+tc: t\in [t_1, t_2]\,\} 
		\end{equation}
		for some rational numbers $t_1<t_2$.
		
		Finally, $l_{w+z}$ is a segment with endpoints $a+a'$, $b+b'$ which 
		can be written in the form 
		\begin{equation} \label{E3}
		l_{w+z}=\{\, a+d+tc: t\in [t_1, t_2+m]\,\} .
		\end{equation}
		
		Using (\ref{E1})--(\ref{E3}) and counting 
		integer points in the segments $[0,m]$, $[t_1,t_2]$, $[t_1,t_2+m]$ we get the lemma. 
	\end{proof}


\begin{thebibliography}{22}
		
		\bibitem{artal2011}  E. Artal Bartolo, P. Cassou-Nogu\`es, I. Luengo, and  A. Melle-Hern\'andez, 
		{\em On $\nu$-quasi-ordinary power series: factorization, Newton trees and resultants,}  
		Contemp. Math., vol. 538, Amer. Math. Soc., Providence, RI, 2011, pp. 321-343.
		
		\bibitem{artal2015high} E. Artal Bartolo, I. Luengo, and A. Melle-Hern\'andez, 
		{\em High-school algebra of the theory of dicritical divisors:  
			Atypical fibres for special pencils and polynomials,} 
		Journal of Algebra and Its Applications, 14.09 (2015), 1540009.
		
		
		\bibitem{CPmixed}
		V.~Cossart and O.~Piltant,
		{\em Resolution of Singularities of Arithmetical Threefolds {II},}
		to appear in Journal of Algebra, 
		\url{https://doi.org/10.1016/j.jalgebra.2019.02.017}.
		
		
		\bibitem{barroso2005decomposition} E. Garc\'{\i}a Barroso and P. Gonz\'alez P\'erez, 
		{\em Decomposition in bunches of the critical locus of a quasi-ordinary map,} 
		Compos. Math. 141 no.\  2 (2005), 461--486. DOI: 10.1112/S0010437X04001216.
		
		\bibitem{perez2000singularites} P. Gonz\'alez P\'erez,
		{\em Singularit\'es quasi-ordinaires toriques et poly\`edre de Newton du discriminant,}
		Canad. J. Math. 52 (2000), 348--368.
		
			\bibitem{hefez} A. Hefez,
		{\em Irreducible plane curve singularities. Real and complex singularities},  Lecture Notes in Pure and Appl. Math., 232, Dekker, New York, 2003, 1--120,
		
		
		\bibitem{HiroCharPoly} H.~Hironaka,
		{\em Characteristic polyhedra of singularities,}
		J. Math. Kyoto Univ. 7 (1967), 251--293.
		
		
		\bibitem{Lang} S.~Lang,
		{\em Algebra,}
		Revised third edition, Graduate Texts in Mathematics, 211. Springer-Verlag, New York, 2002. xvi+914 pp. 
		
		\bibitem{lipkovski1988newton} A. Lipkovski, 
		{\em  Newton polyhedra and irreducibility,} Math. Z. 199 (1988), no.~1, 119--127.
		
		\bibitem{parusinski2012abhyankar} A. Parusi\'nski and G. Rond, 
		{\em The Abhyankar-Jung Theorem}, Journal of Algebra 365 (2012),  29--41.
		
		\bibitem{rond2017irreducibility}  G. Rond and B. Schober, 
		{\em An irreducibility criterion for power series},
		Proceedings of the American Mathematical Society,
		Volume 145, Number 11 (2017),  4731--4739, doi.org/10.1090/proc/13635.
		
	\end{thebibliography}
\end{document}